\documentclass[11pt]{article}

\usepackage[a4paper,margin=1in]{geometry}
\usepackage[T1]{fontenc}
\usepackage[utf8]{inputenc}
\usepackage{lmodern}
\usepackage{amsmath,amsthm,amssymb,mathtools}
\usepackage{booktabs,multirow,makecell,array}
\usepackage{authblk}
\usepackage{enumitem}
\usepackage{xcolor}
\usepackage{listings}
\usepackage{graphicx}
\usepackage[
  backend=biber,
  style=numeric,
  sorting=none,
  natbib=true,
  doi=true,
  url=true,
  isbn=false,
  eprint=false
]{biblatex}
\usepackage{hyperref}

\hypersetup{
  colorlinks=true,
  linkcolor=blue,
  citecolor=blue,
  urlcolor=blue,
  breaklinks=true
}

\newtheorem{theorem}{Theorem}[section]
\newtheorem{proposition}[theorem]{Proposition}
\newtheorem{lemma}[theorem]{Lemma}
\newtheorem{corollary}[theorem]{Corollary}
\newtheorem{remark}[theorem]{Remark}

\newcommand{\Cxi}{C^{\xi}}

\title{A SAT-based Filtering Framework for Exact Coverings of $K_{33}$ by Cliques of Order 3, 4 or 5}
\author[1]{Petr Kova\v{r}}
\author[1,2,3]{Yifan Zhang}
\affil[1]{Department of Applied Mathematics, VSB -- Technical University of Ostrava}
\affil[2]{Department of Mathematics, University of Ostrava}
\affil[3]{Department of Algebra, Charles University}
\date{}

\begin{document}
\maketitle

\begin{abstract}
	We investigate the minimum number of cliques of orders $3$, $4$, and $5$ needed to cover the edges of $K_{33}$ with zero excess. General covering results yield the lower bound 57. The main result of the paper is that no decomposition of $K_{33}$ into $57$ blocks from $\{K_3,K_4,K_5\}$ exists.
	
	Our approach is algorithmic and relies on a layered exact-search pipeline rather than a single monolithic solver. We combine symmetry reduction, enumeration of local signatures, arithmetic profile restrictions, geometric tests for partial configurations, SAT realisation on reduced instances, and final decoding checks. The benchmark comparison shows that this structured approach is substantially more effective than direct ILP, DLX, or SAT formulations on the full problem.
	
	As a consequence, we obtain $\Cxi(33,\{3,4,5\},2)\ge 58$. A short additional counting argument further strengthens this to $\Cxi(33,\{3,4,5\},2)\ge 59$. We also give new compressed proofs for the known exceptional cases $K_{18}$ and $K_{19}$ in the setting of $\{K_3,K_4\}$-decompositions, illustrating the same combination of theoretical reduction and exact computation.
	
	Finally, we explain the relevance of the $K_{33}$ result to the open packing problem of determining the packing number $D(33,5,2)$. A packing of $51$ copies of $K_5$ in $K_{33}$ would leave a $4$-regular graph on $9$ vertices, and our exclusion already rules out two natural candidate leave structures.
\end{abstract}

\noindent\textbf{Keywords:} exact covering, clique decomposition, complete graph, SAT-assisted search, symmetry reduction

\noindent\textbf{MSC Classification: Primary 05C70; Secondary 05B40 , 05C85.}

\section{Introduction}

Decompositions and coverings of complete graphs by small cliques lie at the intersection of design theory, extremal graph theory, and exact combinatorial search. In the mixed-size setting one allows more than one block size, which typically makes the arithmetic as well as the structure harder: a single edge count no longer determines the block composition, and multiple local configurations may survive the first congruence tests.

In this paper we consider coverings of complete graphs by cliques of orders $3$, $4$, and $5$. For a covering, edges covered more than once form the \emph{excess}. We first minimise the excess and then, among coverings with minimum excess, minimise the number of covering cliques. For fixed allowed block sizes $\{3,4,5\}$, we write $\Cxi(v,\{3,4,5\},2)$ for the minimum number of blocks in a covering of $K_v$ with minimum excess. A zero-excess covering is called \emph{exact}, and it corresponds in fact to a graph decomposition. We always denote by $\alpha,\beta,\gamma$ the number of triples, quadruples and quintuples in a covering design, respectively. Their local version $\alpha_x,\beta_x,\gamma_x$ denotes the number of triples, quadruples and quintuples containing a given vertex $x$, respectively. 

For $v=33$, the general theory already yields two important facts: the minimum excess is zero, and the general lower bound is $57$ \cite{k3k4k5paper}. The open question is therefore how far one can push the zero-excess lower bound beyond $57$. The main objective of the present paper is to explain a successful algorithmic strategy for doing exactly that.

The core message is not simply that one particular case is impossible. Rather, it is that large mixed-clique covering instances often resist direct attacks by a single solver. A monolithic formulation using Integer Linear Programming (ILP), Satisfiability Test (SAT), or Dancing Links Algorithm X (DLX) can be written down immediately, but in hard infeasible cases (or UNSAT using the language of satisfiability) this often leads to very long runs with little structural information returned to the user. Our approach is to replace one global exact-search problem by a sequence of smaller exact decisions. At each layer, some branches are excluded by symmetry, some by arithmetic, some by local geometry, some by SAT solver, and some only after decoding SAT models back into the combinatorial language and checking edge-disjointness or other constraints again. In practice, this avoids the ``eternal waiting'' typical of large unsatisfiable monolithic runs.

A secondary aim of the paper is expository. We briefly review the three most common exact-search paradigms for clique coverings---ILP, DLX, and SAT---and explain how our layered method is built on top of them rather than against them. We also include a compressed warm-up proof for the minimum exact $\{K_3,K_4\}$-cover of $K_{18}$ and $K_{19}$, because these cases already exhibit the same principle: use theory first to isolate a handful of sharply structured residues, and call the solver only on those residues. 

\paragraph*{Main results.}
The principal algorithmic theorem of the paper is the following.

\begin{theorem}\label{thm:main-57}
There is no decomposition of $K_{33}$ into $57$ cliques of orders from $\{3, 4, 5\}$.
\end{theorem}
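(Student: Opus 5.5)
Suppose a decomposition of $K_{33}$ into $57$ blocks exists, with $\alpha,\beta,\gamma$ triples, quadruples and quintuples. I would first fix the global arithmetic:
\[
\alpha+\beta+\gamma=57,\qquad 3\alpha+6\beta+10\gamma=\binom{33}{2}=528.
\]
Locally, every vertex $x$ has degree $32$ in $K_{33}$, so
\[
2\alpha_x+3\beta_x+4\gamma_x=32 .
\]
Summing the local counts gives $\sum_x\alpha_x=3\alpha$, $\sum_x\beta_x=4\beta$ and $\sum_x\gamma_x=5\gamma$. Eliminating $\alpha$ from the global equations gives $3\beta+7\gamma=357$, so $\gamma\equiv 0\pmod 3$, with $\beta=119-\tfrac{7}{3}\gamma\ge 0$ and $\alpha\ge 0$. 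This leaves a short list of global profiles $(\alpha,\beta,\gamma)$. Since $57$ blocks is close to the minimum, these profiles should be dominated by quintuples.

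For each vertex $x$, I would enumerate the local signatures $(\alpha_x,\beta_x,\gamma_x)$ solving $2\alpha_x+3\beta_x+4\gamma_x=32$. Note that $\beta_x$ must be even. I would rank the signatures by block count $\alpha_x+\beta_x+\gamma_x$. Writing $n_s$ for the number of vertices of signature $s$, the three summed identities together with $\sum_s n_s=33$ form a small integer linear system. Solving it exhaustively gives the admissible \emph{profiles}, meaning multisets of vertex signatures. I expect most profiles to force many vertices into the extremal signature $\gamma_x=8$, whose link at $x$ is eight disjoint $K_4$'s. Vertices with non-maximal signatures then number only a handful; call them the defect vertices.

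Next I would apply geometric restrictions on partial configurations. Two quintuples share at most one vertex, and so do any two blocks. The blocks that are not quintuples, together with the defect vertices, must form a small configuration. Its edge set must also be compatible with how the quintuples pack the complement, and a packing of $K_5$'s has strong structure (compare the leave of $51$ $K_5$'s being $4$-regular on $9$ vertices). For each surviving profile, I would enumerate up to isomorphism the placements of the non-$K_5$ blocks on the defect vertices. Symmetry reduction would fix one block and the labels of the defect vertices, and I would discard placements violating local counting, such as a vertex lying in too many small blocks, or a leave that no edge-disjoint $K_5$ family can complete for degree reasons.

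Each remaining partial configuration becomes a reduced SAT instance. There is one variable per candidate $5$-subset (or candidate block) compatible with the fixed part, exactly-one constraints cover each remaining edge, and a cardinality constraint fixes the block counts. Symmetry-breaking clauses are added for the stabiliser of the partial configuration. Any SAT model would be decoded and re-checked for edge-disjointness and exact counts. The claim follows when every instance returns UNSAT and no decoded model survives.

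I expect the main obstacle to be the profiles in which the non-$K_5$ part is largest, for example many triples or quadruples spread over many vertices. There the number of inequivalent partial configurations explodes and the SAT residues stay hard. I would first try to kill these cases arithmetically, by bounding the number of defect vertices through a second-moment or pair-counting argument on the quintuple intersections. Only then would I branch on the configuration of the quadruples, since they are the most constraining blocks.
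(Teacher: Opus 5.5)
Your arithmetic and local set-up are sound and match the paper's opening. The relation $3\beta+7\gamma=357$ leaves exactly $(\alpha,\beta,\gamma)\in\{(2,7,48),(6,0,51)\}$. The first profile is killed by the kind of local counting you describe, although the paper simply cites it from earlier work. Parity and a neighbour count give $\beta_x\in\{0,2,4\}$. The $28$ quadruple incidences then force a vertex with $\beta_x=4$, since $\beta_x=2$ needs $\alpha_x=1$. That vertex has $12$ distinct $K_4$-neighbours, but the seven $K_4$'s span only $9$ or $10$ vertices. In the second profile, your placement enumeration will correctly produce nine defect vertices with $\alpha_x=2$, carrying the grid or the prism triangle pattern. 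Your forecast of the hard case is inverted, though. The difficult profile is the one with the \emph{fewest} small blocks and no quadruples at all, so branching on quadruples does nothing there.

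The genuine gap is your last step. For $(6,0,51)$ the partial configuration is just the six triangles. Your reduced SAT instance then asks for $51$ of roughly $1.76\times 10^5$ admissible $5$-subsets exactly covering the remaining $510$ edges, with a residual symmetry group containing $\mathfrak S_{24}$ on the outside vertices. This is essentially the monolithic instance of Section~\ref{sec:direct-benchmarks} with the triangles frozen. Equivalently, it asks whether a $51$-packing of $K_5$'s in $K_{33}$ with grid or prism leave exists, which is part of the open problem on $D(33,5,2)$. The paper's direct ILP, DLX and SAT runs on this instance all time out, and nothing in your plan makes it smaller, so ``every instance returns UNSAT'' is not a step you can actually carry out.

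The missing idea is a second stratification relative to the distinguished $K_9$. Classify quintuples by the number $s$ of their vertices in $V(T)$. Combine $n_2+3n_3=18$, $n_1+2n_2+3n_3=63$ and $\sum_s n_s=51$ with the outside-vertex equations $\sum_s n_s(x)=8$ and $n_1(x)+2n_2(x)+3n_3(x)=9$; this gives $n_3\le 4$ and ten cases. Then search not over $5$-subsets but over how each of the $24$ outside vertices meets $V(T)$, i.e.\ which type-$2$ pairs and type-$3$ triples it joins. Filter these assignments by global profile distributions, by orbit reduction under $\operatorname{Aut}(T)$, and by the geometry of the few type-$0$ quintuples. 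Only these far smaller signature-assignment problems go to SAT, followed by an edge-disjointness audit and a type-$1$ completion check on the handful of survivors.
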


This immediately implies $\Cxi(33,\{3,4,5\},2)\ge 58$. In fact, the argument can be pushed one step further.

\begin{theorem}\label{thm:main-59}
We have
$\Cxi(33,\{3,4,5\},2)\ge 59$.
\end{theorem}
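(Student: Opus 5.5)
The plan is to mirror the proof of Theorem~\ref{thm:main-57}: first reduce the case of $58$ blocks to a short list of arithmetic profiles by counting, then kill whatever survives with the same local and geometric filters used for $57$. Since $K_{33}$ admits exact coverings (minimum excess zero), it suffices to show that no decomposition of $K_{33}$ into exactly $58$ blocks from $\{K_3,K_4,K_5\}$ exists. Together with Theorem~\ref{thm:main-57} and the general bound $57$, this gives $\Cxi(33,\{3,4,5\},2)\ge 59$.

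\emph{Global counting.} Suppose such a decomposition exists. Counting edges and blocks gives
\[
3\alpha+6\beta+10\gamma=528,\qquad \alpha+\beta+\gamma=58 .
\]
Eliminating $\alpha$ yields $3\beta+7\gamma=354$. Hence $3\mid\gamma$ and $\beta=(354-7\gamma)/3$.

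\emph{Local counting.} Write $r_x=\alpha_x+\beta_x+\gamma_x$ for the number of blocks at $x$. The degree condition $2\alpha_x+3\beta_x+4\gamma_x=32$ can be rewritten as
\[
\beta_x+2\gamma_x=32-2r_x .
\]
Consequences:
\begin{itemize}[nosep]
\item $\beta_x$ is even;
\item $r_x\ge 8$, with equality only when $\gamma_x=8$, i.e.\ the $K_5$'s at $x$ partition $N(x)$;
\item $\gamma_x\le 16-r_x$.
\end{itemize}
Counting incidences,
\[
\sum_x r_x=3\alpha+4\beta+5\gamma=174+\beta+2\gamma=292-\gamma/3 .
\]
So the total surplus is $\sum_x(r_x-8)=28-\gamma/3$. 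Summing $\gamma_x\le 16-r_x$ over all vertices gives $5\gamma\le 528-\sum_x r_x=236+\gamma/3$. Hence $\gamma\le 50$, and with $3\mid\gamma$ we get $\gamma\le 48$.

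\emph{Profile enumeration.} For each admissible $\gamma$, I will list the local signatures $(\alpha_x,\beta_x,\gamma_x)$ with $\beta_x$ even and $r_x=8+s_x$, and the multisets of $33$ signatures satisfying three conditions:
\[
\textstyle\sum_x s_x=28-\gamma/3,\qquad \sum_x\gamma_x=5\gamma,\qquad \sum_x\beta_x=4\beta .
\]
This is exactly the signature/profile stage of the pipeline for Theorem~\ref{thm:main-57}. The surplus budget $28-\gamma/3$ is small when $\gamma$ is large, so most vertices are then forced into the rigid signature $(0,0,8)$.

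\emph{Geometric and SAT stages.} Next I apply the same partial-configuration tests. The key one is that two $(0,0,8)$ vertices $x,y$ share exactly one $K_5$, and the $K_5$'s through these rigid vertices must form a partial linear space of block size $5$. This bounds the number of such vertices and excludes the high-$\gamma$ profiles. It also links directly to the packing problem: a large $\gamma$ means a near-packing of $K_5$'s, whose leave is a small graph decomposable into few $K_3$'s and $K_4$'s. Each surviving profile is then encoded as a reduced SAT instance, with the rigid $K_5$ structure fixed up to symmetry, and decoded models are rechecked for edge-disjointness.

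The main obstacle is the middle range of $\gamma$, roughly $\gamma\approx 30$--$42$. There the surplus budget leaves many mixed signatures and the arithmetic alone does not force rigidity. For this range I would first try a finer counting on triangles: each $K_3$ or $K_4$ through $x$ uses a pair of non-$K_5$ neighbours, which bounds $\alpha_x$ and $\beta_x$ jointly. Only after that would I pass to the SAT stage with the rigid vertices' $K_5$'s fixed up to symmetry.
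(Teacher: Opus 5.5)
\textbf{Missing constraint.} Your global counting stops one step short. From $\alpha+\beta+\gamma=58$ and $3\beta+7\gamma=354$ you also get $\alpha=\tfrac{4\gamma}{3}-60$, so $\alpha\ge 0$ forces $\gamma\ge 45$. Your local bound $\gamma\le 48$ is just $\beta\ge 0$ in disguise, since $\gamma_x\le 16-r_x$ is equivalent to $\beta_x\ge 0$. With $3\mid\gamma$, only $(\alpha,\beta,\gamma)=(0,13,45)$ and $(4,6,48)$ survive. The ``middle range'' $\gamma\approx 30$--$42$ that you call the main obstacle is therefore empty: $\gamma=30$, for instance, would need $\beta=48$ and $\alpha=-20$.

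\textbf{No exclusion argument.} The more serious gap is that for the two genuine cases you give no argument, only a plan to run filters and SAT with no stated outcome. So nothing is actually excluded. The missing idea is that in each case the local congruences pin all small blocks onto a small rigid vertex set, and a short count of $K_5$-incidences on that set is contradictory.

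\emph{Case $(0,13,45)$.} Here $3\beta_x+4\gamma_x=32$ gives $4\mid\beta_x$, and $\beta_x=8$ would force $\beta\ge(8+4\cdot 24)/4=26$. Hence exactly $13$ vertices have $\beta_x=4$. The thirteen $K_4$'s live on these vertices and cover all $78$ edges of that $K_{13}$. So no $K_5$ contains two of them, while each lies in $\gamma_x=5$ copies of $K_5$, giving $\gamma\ge 65>45$.

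\emph{Case $(4,6,48)$.} Here $\beta_x$ is even, and $\beta_x\ge 4$ is impossible: the twelve $K_4$-neighbours would each have $\beta_y\ge 2$, exceeding $4\beta=24$. So the active vertices have $(\alpha_x,\beta_x)\in\{(1,2),(2,0),(3,2),(4,0)\}$. The incidence equations $\lambda_1+2\lambda_2+3\lambda_3+4\lambda_4=12$ and $2\lambda_1+2\lambda_3=24$ force exactly twelve vertices of type $(1,2)$. On this set $U$ the small blocks cover $48$ of the $66$ edges, and each $u\in U$ has $\gamma_u=6$. With $n_s$ the number of $K_5$'s meeting $U$ in $s$ vertices,
\[
\sum_s n_s=48,\qquad \sum_s s\,n_s=72,\qquad \sum_s\binom{s}{2}n_s=18 .
\]
Since $\binom{s}{2}\ge s-1$, this gives $18\ge 72-48=24$, a contradiction.

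Your surplus bookkeeping points the right way: the budgets $28-\gamma/3=13$ and $12$ are used up exactly by these non-rigid vertices, each with $r_x=9$. The contradiction comes from those vertices, not from a partial-linear-space argument on the $(0,0,8)$ vertices. No SAT or geometric filtering is needed.
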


The proof of Theorem~\ref{thm:main-59} is short once Theorem~\ref{thm:main-57} is established, so we include it as a final section. Nevertheless, the main methodological contribution of the paper remains the algorithmic exclusion of the $57$-block case.

\section{Three standard exact paradigms}

This section briefly recalls the three exact paradigms that underlie most computational work on covering problems: integer linear programming, dancing links, and SAT.

\subsection{Integer linear programming (ILP)}

Let $R$ be the family of candidate cliques of allowed sizes. For each $S\in R$ we introduce a binary decision variable $x_S$, with $x_S=1$ meaning that the clique on vertex set $S$ is selected. If $\mu(e)$ denotes the required covering multiplicity of an edge $e$, then the exact-multiplicity model is
\[
\sum_{S\in R\, :\, e\subseteq S} x_S = \mu(e) \qquad (e\in E(K_v)).
\]
For an exact covering without excess, i.e. a decomposition, we have $\mu(e)=1$ for every edge $e$. If we additionally fix the numbers of blocks of each size, we impose
\[
\sum_{|S|=3} x_S = \alpha, \qquad \sum_{|S|=4} x_S = \beta, \qquad \sum_{|S|=5} x_S = \gamma.
\]
This is especially convenient when earlier combinatorial arguments have already reduced the problem to a small list of admissible triples $(\alpha,\beta,\gamma)$. Standard background on ILP and computational complexity may be found in \cite{Cook1971,GareyJohnson1979}, while the implementation used in our computations is based on Gurobi \cite{gurobi}.

ILP is attractive because it can produce both a feasible configuration and a rigorous lower bound from the relaxation. In small and medium instances it is often the most convenient first tool. In the $K_{18}$ and $K_{19}$ warm-up cases, for example, ILP is used mainly as a final completion check after structural pruning.

\subsection{Dancing links (DLX)}

Knuth's dancing links algorithm treats exact cover as a sparse incidence problem \cite{knuth_2000}. In clique decompositions, the columns correspond to edges and the rows to candidate blocks. The original exact-cover setting fits decompositions directly; for coverings with prescribed multiplicities one uses a multiplicity-aware generalisation of the algorithm, which has already been applied to clique coverings of complete graphs \cite{Kovar2023}.

The main advantage of dancing links is that the data structure supports very fast reversible branching. For pure decomposition problems or small fixed-excess instances, it can outperform more algebraic approaches. Its main weakness here is the same as for any monolithic exhaustive search: in large UNSAT instances, even a well-implemented exact-cover search may spend a long time exploring a huge symmetric space before certifying nonexistence.

On a computer with a 12th Gen Intel(R) Core(TM) i7-12700H CPU and 20 cores, we test both the ILP and dancing links with the simple problem of decomposing a complete graph of order $n$ into triangles, where $n\equiv 1,3\pmod{6}$. See Figure \ref{fig:knbyk3upto100} for the comparison between the dancing links algorithm and the integer linear programming for $n<100$. Evidently, the dancing links algorithm finds an existing solution more efficiently than the integral programming. 

\begin{figure}
	\centering
	\includegraphics[scale=0.56]{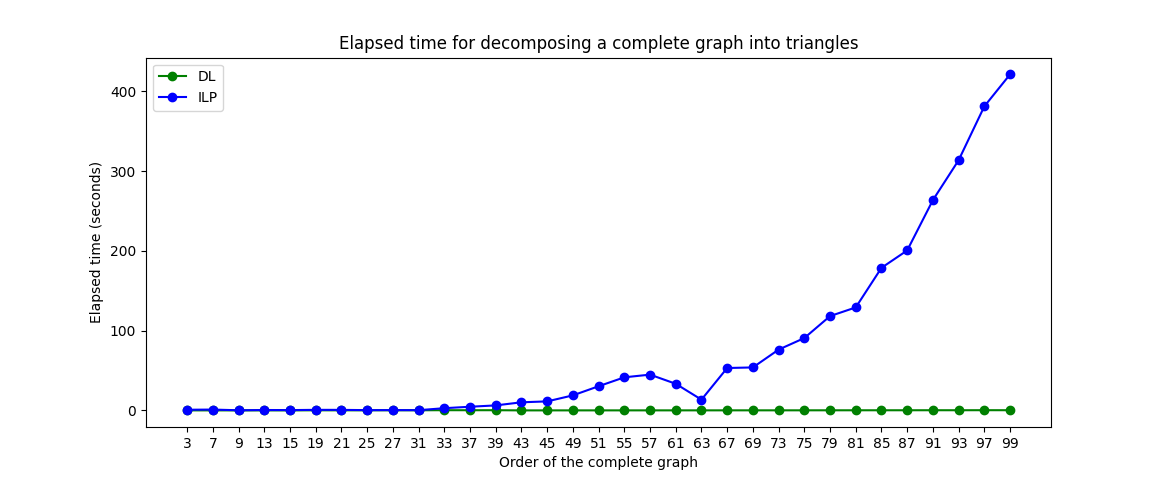}
	\caption{Comparison of the dancing links (1 solution) with integer linear programming}
	\label{fig:knbyk3upto100}
\end{figure}

\subsection{SAT}

A SAT encoding uses one Boolean variable $x_S$ for each candidate clique and expresses exact edge multiplicities and block counts by CNF clauses. There has been some attempt to apply SAT in graph decomposition problems using clever symmetry-breaking methods \cite{Zhao_2017}. Modern CDCL solvers are very effective once the parameters are fixed and the instance has already been substantially reduced. In particular, SAT is a natural feasibility engine for ``small but still exact'' residual problems; see \cite{Biere2009} for general background. The solver used in our computations is \texttt{kissat} \cite{BiereFazekasFleuryHeisinger2020,KissatWeb}.

\paragraph*{Why not use one monolithic encoding?}

For the full $K_{33}$ problem with allowed block sizes $3$, $4$, and $5$, the raw candidate family already has size
\[
\binom{33}{3}+\binom{33}{4}+\binom{33}{5}=5~456+40~920+237~336=283~712.
\]
Thus a monolithic exact encoding begins with $283{,}712$ candidate rows or variables before any symmetry is removed. The number of edges is only $\binom{33}{2}=528$, so the difficulty is not the number of basic constraints, but the enormous branching space together with heavy symmetry and the fact that the target instances are unsatisfiable.

Even after the arithmetic reduction relevant for the $57$-block lower bound, namely to the unique composition $(\alpha,\beta,\gamma)=(6,0,51)$, a direct model still contains
\[
\binom{33}{3}+\binom{33}{5}=5~456+237~336=242~792
\]
candidate $K_3$- and $K_5$-blocks. Section~\ref{sec:direct-benchmarks} records one-hour single-thread benchmarks on three monolithic encodings of exactly this reduced instance. Gurobi times out on a $242{,}792$-variable feasibility ILP without finding any incumbent, the direct DLX search times out after visiting millions of nodes, and a deliberately direct SAT encoding already expands to $2{,}773{,}842$ variables and $7{,}593{,}150$ clauses before the solver starts. So the obstruction is not that a direct model cannot be written down; it can. The obstruction is that the unreduced search space remains too large, too symmetric, and too uninformative for a one-shot proof strategy.

This is exactly the regime where a progressive filtering strategy becomes useful: the point is not to replace ILP, DLX, or SAT, but to place them late in the pipeline, when the branch has already been squeezed into a much smaller exact subproblem.

\section{A warm-up: $K_{18}$ and $K_{19}$ with block sizes $3$ and $4$}

The cases $K_{18}$ and $K_{19}$ for exact $\{K_3,K_4\}$-covers are small enough to describe in some detail, but already display the same philosophy as the $K_{33}$ search: first isolate a tiny family of structured residues by congruence and counting, then call ILP only on those residues.

Let $\alpha$ and $\beta$ denote the numbers of $K_3$'s and $K_4$'s in a decomposition of $K_v$. Since
\[
3\alpha+6\beta=\binom v2,
\]
minimising $\alpha+\beta$ is equivalent to minimising $\alpha$. Thus it is enough to push the lower bound on the number of triangles up to the smallest value for which an explicit decomposition exists. Using the standard exact-multiplicity ILP model from Section~2 with Gurobi \cite{gurobi}, one finds decompositions with
$(\alpha,\beta)=(15,18)$ for $K_{18}$ (see Table \ref{tab:k18byk3k4}),
and
$(\alpha,\beta)=(13,22)$ for $K_{19}$ (see Table \ref{tab:k19byk3k4}).

\begin{table}
	\centering
	\footnotesize
	\begin{tabular}{lllllll}
		1 2 10 17 & 2 4 5 7 & 5 9 13 18 &  & 1 14 18 & 3 14 17 & 7 10 18 \\
		1 3 5 15 & 2 6 14 16 & 5 11 16 17 &  & 2 8 18 & 4 8 15 & 7 12 17 \\
		1 4 9 16 & 2 9 12 15 & 6 8 9 17 &  & 3 4 6 & 4 10 11 & 7 15 16 \\
		1 6 7 13 & 3 12 16 18 & 6 11 15 18 &  & 3 7 8 & 4 17 18 & 10 14 15 \\
		1 8 11 12 & 4 12 13 14 & 7 9 11 14 &  & 3 9 10 & 5 8 14 & 13 15 17 \\
		2 3 11 13 & 5 6 10 12 & 8 10 13 16 &  &  &  & 
	\end{tabular}
	\caption{$K_{18}$ decomposed into 15 $K_3$'s and 18 $K_4$'s.}
	\label{tab:k18byk3k4}
\end{table}
\begin{table}
	\centering
	\footnotesize
	\begin{tabular}{llllllll}
		1 8 9 10 & 2 9 11 16 & 4 5 9 18 & 7 8 14 18 &  & 1 2 3 & 3 7 12 & 7 11 17 \\
		1 11 12 13 & 2 10 12 18 & 4 12 16 19 & 7 9 13 19 &  & 1 4 6 & 4 8 11 &  \\
		1 14 15 16 & 3 4 10 17 & 5 8 16 17 & 9 12 15 17 &  & 1 5 7 & 4 13 14 &  \\
		1 17 18 19 & 3 6 9 14 & 5 10 13 15 & 10 11 14 19 &  & 2 8 13 & 5 12 14 &  \\
		2 4 7 15 & 3 8 15 19 & 6 7 10 16 &  &  & 2 14 17 & 6 8 12 &  \\
		2 5 6 19 & 3 13 16 18 & 6 11 15 18 &  &  & 3 5 11 & 6 13 17 & 
	\end{tabular}
	\caption{$K_{19}$ decomposed into 13 $K_3$'s and 22 $K_4$'s.}
	\label{tab:k19byk3k4}
\end{table}

We now focus on the lower bounds.
\begin{theorem}\label{thm:k18k19-warmup}
We have
$C^\xi(18,\{3,4\})=33$, and
$C^\xi(19,\{3,4\})=35$.
\end{theorem}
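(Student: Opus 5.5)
We use the two explicit decompositions in Tables~\ref{tab:k18byk3k4} and~\ref{tab:k19byk3k4} for the upper bounds. For the lower bounds we argue purely through the number $\alpha$ of triangles, as noted above. Since $3\alpha+6\beta=\binom v2$, we have $\alpha+2\beta=\binom v2/3$. For $v=18$ this reads $\alpha+2\beta=51$, so $\alpha$ is odd and $\alpha+\beta=(51+\alpha)/2$; the bound $33$ is equivalent to $\alpha\ge 15$. For $v=19$ it reads $\alpha+2\beta=57$, so again $\alpha$ is odd, and $\alpha+\beta=(57+\alpha)/2$; the bound $35$ is equivalent to $\alpha\ge 13$. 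It therefore suffices to exclude $\alpha\le 13$ for $K_{18}$ and $\alpha\le 11$ for $K_{19}$.

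\textbf{Local congruences.} At a vertex $x$ we have $2\alpha_x+3\beta_x=v-1$.

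For $v=18$ this gives $2\alpha_x\equiv 17\equiv 2\pmod 3$, so $\alpha_x\equiv 1\pmod 3$, hence $\alpha_x\in\{1,4,7,\dots\}$. In particular every vertex lies in a triangle. Together with $\sum_x\alpha_x=3\alpha$, this gives $3\alpha\ge 18$, with the surplus $3\alpha-18$ distributed in multiples of $3$. For $\alpha\in\{7,9,11,13\}$ the surplus is $3,9,15,21$, so at most $1,3,5,7$ vertices have $\alpha_x=4$ (or fewer if some have $\alpha_x=7$). All other vertices have $\alpha_x=1$ and $\beta_x=5$.

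For $v=19$ this gives $\alpha_x\equiv 0\pmod 3$, so $\alpha_x\in\{0,3,6,9\}$. Hence the union $H$ of all triangles has minimum nonzero degree $6$. Its support $U$ satisfies $3|U|\le\sum_x\alpha_x=3\alpha$, so $|U|\le\alpha\le 11$. A triangle system on $|U|$ vertices with all degrees at least $6$ needs $|U|\ge 7$. This leaves a small list of degree profiles $(\alpha_x)_{x\in U}$ for $\alpha\in\{1,3,\dots,11\}$; the very small values of $\alpha$ are already impossible, because $|U|\ge 7$ forces $3\alpha\ge 21$.

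\textbf{Residue enumeration and completion.} In each admissible arithmetic profile, we enumerate up to isomorphism the possible triangle-graphs $H$, that is, edge-disjoint unions of $\alpha$ triangles with the prescribed local counts $\alpha_x$. The requirement is then that $K_v-E(H)$ decomposes into $K_4$'s. We would prune further with necessary conditions before any solver call. For a vertex $x$ with $\alpha_x=1$ in $K_{18}$, the $15$ non-triangle neighbours of $x$ must split into $5$ disjoint triples, each forming a triangle in $K_v-E(H)$. In general, every edge of $K_v-E(H)$ must lie in a $K_4$ of that graph. For the surviving residues we run the exact-multiplicity ILP of Section~2, restricted to $K_4$ candidates in $K_v-E(H)$ with $\mu\equiv 1$, and certify infeasibility. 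Symmetry is broken by fixing $H$ up to isomorphism, and, where convenient, by fixing the $K_4$'s through a chosen vertex of minimum $\alpha_x$.

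\textbf{Main obstacle.} I expect the hardest step to be controlling the number of nonisomorphic triangle-graphs $H$ in the largest residues, $\alpha=13$ for $K_{18}$ and $\alpha=11$ for $K_{19}$. There the triangles may overlap in many ways, for example forming partial Steiner-type configurations on the high-degree vertices. The first thing I would try is to enumerate $H$ vertex-by-vertex, starting from the vertices with $\alpha_x\ge 4$ (respectively $\alpha_x\ge 6$), and to apply the local $K_4$-parity test at each vertex immediately, so that most partial $H$ are killed before ILP is invoked.
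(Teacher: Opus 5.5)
Your skeleton matches the paper's: vertex congruences, $\alpha$ odd and $\alpha\ge 7$, then residual triangle graphs $H$ and an ILP test for a $K_4$-decomposition of $K_v-E(H)$. The arithmetic you wrote is correct. The gap is that the proposal stops exactly where the proof's content begins. Beyond $\alpha\ge 7$ nothing is established, and the residues are never determined. The ``main obstacle'' you name is left open, so the lower bounds rest on a search whose size and output are unspecified.

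The paper closes this with structural reductions that make the residue list tiny and explicit. For $K_{19}$ you were one line away. You have $|U|\le\alpha\le 11$, while a vertex with $\alpha_x\ge 6$ has at least $12$ triangle-neighbours, so $\alpha_x\ge 6$ is impossible. Hence every triangle-vertex lies in exactly three triangles, $|U|=\alpha$, and $H$ is $6$-regular on $\alpha$ vertices --- one profile, not a list. Then:
\begin{itemize}
\item $\alpha=7$ forces $H=K_7$, i.e.\ a $\mathrm{PBD}(19,\{4,7^*\},1)$, which the paper excludes by Rees--Stinson;
\item $\alpha=9$ leaves the four $2$-regular complements $C_9$, $C_6\cup C_3$, $C_5\cup C_4$, $3C_3$;
\item $\alpha=11$ leaves the $266$ six-regular graphs of order $11$, each rejected by ILP.
\end{itemize}
In fact $\alpha=7,9$ need no solver. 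Each $x\in U$ lies in four $K_4$'s, and a $K_4$ meeting $U$ in $k\ge 1$ points uses $\binom k2\ge k-1$ of the $\binom{|U|}2-3\alpha$ free edges inside $U$. So at least $4|U|-\binom{|U|}2+3\alpha$ distinct $K_4$'s are needed, giving $28>25$ and $27>24$.

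For $K_{18}$ the paper first excludes $\alpha_x=7$ by a vertex count. So there are exactly $1,3,5,7$ heavy vertices (in four triangles each) for $\alpha=7,9,11,13$ respectively. It then pins the triangle system down:
\begin{itemize}
\item For $\alpha=7$ it is a four-triangle star plus three disjoint triangles.
\item For $\alpha=9$ the three heavy vertices cannot span a triangle (that needs $21$ vertices) and must pairwise share one.
\item For $\alpha=11$ some three heavy vertices span a triangle; the alternative leaves two patterns, rejected by ILP. Inclusion--exclusion on their outside triangle-neighbourhoods then forces $N_1\cap N_2\cap N_3=\{w_4,w_5\}$, giving a unique configuration.
\item For $\alpha=13$, let $t_i$ be the number of triangles meeting the heavy set $W$ in $i$ points. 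The equations $\sum_i t_i=13$ and $\sum_i i\,t_i=28$, together with $3t_3+t_2\le 19$, leave only $(t_0,t_1,t_2,t_3)\in\{(0,2,7,4),(0,1,9,3),(0,0,11,2)\}$.
\end{itemize}
The bound $3t_3+t_2\le 19$ is the same incidence count. Each heavy vertex lies in three $K_4$'s, and these $21$ incidences must fit into $\beta=19$ blocks, so the $K_4$'s cover at least two edges of $K_7[W]$. These reductions, rather than the ILP calls, carry the proof. Without them your proposal is a plausible (and in principle sound) search plan, not yet an argument.
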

Equivalently, we are going to show that the minimum possible $\alpha$ is 15 for $K_{18}$ and 13 for $K_{19}$. A detailed version of the proof featuring illustrative figures is in \cite{k18k19paper}. 

\subsection{Lower bound for $K_{18}$}

Fix a $\{K_3,K_4\}$-decomposition of $K_{18}$. For a vertex $x$, let $\alpha_x$ and $\beta_x$ denote the numbers of triangles and quadruples through $x$. Counting the degree of $x$ gives
\[
2\alpha_x+3\beta_x=17,
\]
so $\alpha_x\equiv 1\pmod 3$, and
$\alpha_x\in\{1,4,7\}$.
Moreover $3\alpha+6\beta=\binom{18}{2} =153$, hence $\alpha$ is odd, and 
$$\alpha= \frac{\sum_x \alpha_x}{3} \ge \frac{1\cdot 18}{3}=6,$$
so $\alpha\ge 7$.

\paragraph{Excluding $\alpha=7$.}
If some vertex $u$ satisfies $\alpha_u=7$, then
\[
\alpha=\frac{1}{3}\sum_x \alpha_x\ge \frac{7+17\cdot 1}{3}=8,
\]
a contradiction. Thus all vertices satisfy $\alpha_x\in\{1,4\}$, and since $\sum_x\alpha_x=3\alpha=21$ we deduce that there is exactly one vertex with $\alpha_x=4$. Hence the seven triangles consist of four triangles through one distinguished vertex together with three disjoint triangles. The residual graph after removing those seven triangles is then uniquely determined up to isomorphism, and an ILP completion test shows that it admits no $K_4$-decomposition.

\paragraph{Excluding $\alpha=9$.}
Again $\alpha_x\in\{1,4,7\}$. A vertex with $\alpha_x=7$ is impossible by a simple vertex-count argument: one quickly exceeds the available $18$ vertices whether or not a second heavy vertex shares a triangle with it. Therefore every vertex lies in one or four triangles, and since $\sum_x\alpha_x=27$ there are exactly three vertices $u_1,u_2,u_3$ with $\alpha_{u_i}=4$. These three vertices cannot all lie in one triangle, because that would already force at least $21$ vertices. Hence each pair among them must share a triangle, which yields a unique $9$-triangle pattern on $18$ vertices, namely three pairwise intersecting stars completed by three disjoint outer triangles. An ILP test on the remaining edges again shows that no $K_4$-decomposition exists.

\paragraph{Excluding $\alpha=11$.}
The same degree-congruence argument first rules out $\alpha_x=7$, so every vertex again lies in one or four triangles. Since $\sum_x\alpha_x=33$, there are exactly five vertices $w_1,\dots,w_5$ with $\alpha_{w_i}=4$. One can prove that some triple among these five vertices must itself be a triangle in the design: if no such triple existed, then the number $t$ of pairs among $w_1,\dots,w_5$ sharing a triangle would satisfy $t\ge 9$, so only two intersection patterns remain, and both are rejected by ILP. Thus, after relabelling, there is a triangle $T=\{w_1,w_2,w_3\}$.

For $i=1,2,3$, let $N_i$ be the set of vertices outside $T$ that share a triangle with $w_i$. Then $|N_i|=6$. Since $w_4$ and $w_5$ each lie in at least one triangle not meeting $T$, we have $|N_1\cup N_2\cup N_3|\le 14$. Inclusion--exclusion therefore yields
\[
|N_1\cap N_2|+|N_1\cap N_3|+|N_2\cap N_3|-|N_1\cap N_2\cap N_3|\ge 4.
\]
A short case analysis now forces
\(
N_1\cap N_2\cap N_3=\{w_4,w_5\}
\).
Consequently the $11$ triangles are uniquely determined up to isomorphism: besides $T$ they are the six triangles joining each of $w_4,w_5$ to one of $w_1,w_2,w_3$, together with one final triangle through $w_4$ and $w_5$. The residual graph is again infeasible for a $K_4$-decomposition by ILP.

\paragraph{Excluding $\alpha=13$.}
Now each vertex still satisfies $\alpha_x\in\{1,4\}$, and since $\sum_x\alpha_x=39$, exactly seven vertices $W=\{w_1,\dots,w_7\}$ satisfy $\alpha_{w_i}=4$. Let $t_i$ denote the number of triangles containing exactly $i$ vertices of $W$. Then
\[
t_0+t_1+t_2+t_3=13,
\qquad
3t_3+2t_2+t_1=28,
\qquad
3t_0+2t_1+t_2=11.
\]
Together with the obvious bounds on $t_i$, and the fact that the $K_4$'s must cover at least two edges inside $K_7[W]$, this leaves only three possibilities for $(t_0,t_1,t_2,t_3)$:
\[
(t_0,t_1,t_2,t_3)\in \{(0,2,7,4),\ (0,1,9,3),\ (0,0,11,2)\}.
\]
These are the cases labelled $(C5)$, $(C6)$ and $(C7)$ in the long version of the proof.

If $t_3=4$, then the complement in $K_7[W]$ of the four internal triangles is an even graph of size $9$. Up to isomorphism there are six such graphs; a quick triangle-decomposability test discards three immediately, and the remaining three produce only a handful of placements for the two $t_1$-triangles. Each resulting residual graph is rejected by ILP. If $t_3=3$, the same idea applies to the six even graphs of order $7$ and size $12$; only three survive the first screen, and each yields a finite list of partial triangle configurations, all infeasible by ILP search. Finally, if $t_3=2$, the two internal triangles are either disjoint or intersect in one vertex, and in both cases the remaining eleven mixed triangles are forced up to a tiny number of symmetric choices; all of them fail the ILP test for $K_4$-decomposition.

Therefore $\alpha\notin\{7,9,11,13\}$, so $\alpha\ge 15$. Since the explicit ILP construction has $\alpha=15$, we obtain $C^\xi(18,\{3,4\},2)=15+18=33$.

\subsection{Lower bound for $K_{19}$}

Now consider a $\{K_3,K_4\}$-decomposition of $K_{19}$. Counting the degree of a vertex $x$ gives
\[
2\alpha_x+3\beta_x=18,
\]
so \(\alpha_x\equiv 0\pmod 3\).
In particular, if $\alpha\le 11$ then every vertex satisfies $\alpha_x\in\{0,3,6\}$. Moreover $3\alpha+6\beta=\binom{19}{2} =171$, hence $\alpha$ is odd. So there exists some $x_0\in V(K_{19})$ with $\alpha_{x_0}>0$, and thus $\alpha_{x_0}\ge 3$. These 3 triangles at $x_0$ contain 7 vertices in total, denoted by $\{x_0,x_1,\cdots,x_6\}$, and for $i\in \{0,1,\cdots,6\}$, we have $\alpha_{x_i}>0$, and thus $\alpha_{x_i}\ge 3$. Therefore,
\begin{align*}
	\alpha\ge \frac{3\cdot 7 + 0\cdot(19-7)}{3}=7.
\end{align*}

\paragraph{Excluding $\alpha=7$.}
If some vertex lay in six triangles, then its twelve triangle-neighbours would each lie in at least three triangles, giving
\[
\alpha\ge \frac{6+12\cdot 3}{3}=14,
\]
which is impossible. Hence every triangle-vertex, i.e. every vertex $x$ with $\alpha_x>0$, lies in exactly three triangles, and there are exactly seven such vertices. Their seven triangles therefore form a $2$-$(19,\{4,7^*\},1)$ design, that is, a $\operatorname{PBD}(19,\{4,7^*\},1)$; this is impossible by the existence criterion for incomplete block designs of block size four due to Rees and Stinson \cite{ReesStinson1989}. Thus $\alpha\ne 7$.

\paragraph{Excluding $\alpha=9$.}
Again every triangle-vertex lies in exactly three triangles, so there are exactly nine such vertices. Let $H$ be the union of the nine triangles on this $9$-vertex support. Then $H$ is $6$-regular, so its complement is $2$-regular. Hence the complement is one of
\[
C_9,\qquad C_6\cup C_3,\qquad C_5\cup C_4,\qquad 3C_3.
\]
These are the only four isomorphism types. For each type, the residual graph that should be decomposed into $K_4$'s is built explicitly and tested by ILP; all four are infeasible. Therefore $\alpha\ne 9$.

\paragraph{Excluding $\alpha=11$.}
The same argument shows that every triangle-vertex lies in exactly three triangles, so there are exactly eleven such vertices. Let $H$ be the union of the eleven triangles on this support. Then $H$ is a $6$-regular graph of order $11$. Using \texttt{nauty} \cite{McKayPiperno2014}, one enumerates all $266$ isomorphism types of such graphs \cite{nauty-11-reg6}. For each type, one asks whether the complement of $H$ in $K_{19}$ is decomposable into $K_4$'s. Every instance is rejected by ILP. Hence $\alpha\ne 11$.

We conclude that $\alpha\ge 13$, and the explicit construction with $(\alpha,\beta)=(13,22)$ gives
$C^\xi(19,\{3,4\},2)=13+22=35$.

This completes the proof of Theorem~\ref{thm:k18k19-warmup}. The important methodological point is not the particular small numbers, but the pattern: arithmetic and local degree conditions collapse the search to a tiny set of combinatorial residues, and only those residues are passed to the solver. The $K_{33}$ proof uses exactly the same philosophy, but with several more layers between the first reduction and the final exact certification.

\paragraph*{Why the $K_{33}$ proof does not follow the same ILP pattern?}

At first sight one may ask why the $K_{33}$ case is not handled in exactly the same way as $K_{18}$ and $K_{19}$, namely by a direct ILP formulation combined with a short structural case split. The answer is that the smaller proofs are already much less ``pure ILP'' than such a description suggests. In the $K_{18}$ and $K_{19}$ arguments, the decisive work is done before the solver is called: congruence conditions, local degree restrictions, and finite intersection patterns reduce the search to a very small set of residual graphs. ILP is then used only as a final exact completion test for those sharply reduced residues.

For $K_{33}$ this one-step reduction no longer occurs. Even after the arithmetic reduction to $(\alpha,\beta,\gamma)=(6,0,51)$, one still faces a huge family of candidate $K_5$-blocks together with substantial symmetry and many hard unsatisfiable branches. A direct ILP model can certainly be written down, but it is not guided toward the relevant local structure and gives little information in the branches that matter most. The same applies, in different ways, to monolithic DLX and SAT encodings.

Our method should therefore be viewed not as a dismissal of the approach for $K_{18}$ and $K_{19}$, but as its large-scale continuation. We still use theory first and exact solvers later; the difference is that for $K_{33}$ one round of reduction is insufficient, so the reduction must be iterated through several layers. SAT enters only after symmetry, local signatures, arithmetic profiles, and type-$0$ geometry have already compressed the branch to a much smaller Boolean feasibility problem.

\section{Background for $K_{33}$}

We now turn to mixed coverings by $K_3$, $K_4$, and $K_5$, which is the main topic of our study \cite{k3k4k5paper} on covering with mixed cliques. When we only consider coverings by $K_3$ and $K_4$, both with size divisible by 3, it is easy to observe that the minimum excess cannot be zero in some cases. However, when we introduce $K_5$ along with $K_3$ and $K_4$, it is known that the minimum excess in a $\{K_3,K_4,K_5\}$-covering is always empty except for two tiny exceptional cases $K_6$ and $K_8$ \cite{k3k4k5paper}. The general results established in the broader study of $\{K_3,K_4,K_5\}$-coverings imply the following facts that specialise to $v=33$.

\begin{theorem}[\cite{k3k4k5paper}]\label{thm:general-k33}
For $v\equiv 13\pmod{20}$, the minimum excess in a $\{K_3,K_4,K_5\}$-cover of $K_v$ is empty, and
\[
\Cxi(v,\{3,4,5\},2)\ge \frac{v^2-v+84}{20}.
\]
\end{theorem}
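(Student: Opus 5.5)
The statement has two parts: the minimum excess is empty, and the block count is bounded below. I would treat them separately. That the minimum excess is empty is a \emph{construction} statement. The plan for it is to exhibit, for every $v\equiv 13\pmod{20}$, a $\{K_3,K_4,K_5\}$-decomposition of $K_v$, using the recursive constructions of \cite{k3k4k5paper}: group divisible designs with $K_5$-blocks, filled with small ingredient decompositions that are found by computer. I would simply invoke that part. The lower bound, by contrast, should be a purely arithmetic argument about any decomposition, and that is what I would actually carry out.

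Fix a decomposition with $\alpha,\beta,\gamma$ blocks and write $b=\alpha+\beta+\gamma$ and $v=20k+13$. Edge counting gives $3\alpha+6\beta+10\gamma=\binom v2$. Hence
\[
10b=\binom v2+7\alpha+4\beta,
\]
so the claimed bound is equivalent to $7\alpha+4\beta\ge 42$. Since $\binom v2=200k^2+250k+78$ is even, $\alpha$ is even, and $7\alpha+4\beta\equiv 2\pmod{10}$. The only nonnegative pairs with this residue and $7\alpha+4\beta<42$ are
\[
(\alpha,\beta)\in\{(0,3),\ (2,2),\ (0,8),\ (4,1)\}.
\]
The local tool is the degree condition $2\alpha_x+3\beta_x+4\gamma_x=v-1\equiv 0\pmod 4$. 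It gives the following for a vertex $x$:
\begin{itemize}[leftmargin=*]
\item if $\alpha_x=0$, then $\beta_x\equiv 0\pmod 4$;
\item if $\alpha_x$ is odd, then $\beta_x\equiv 2\pmod 4$;
\item if $\alpha_x\equiv 2\pmod 4$, then $\beta_x\equiv 0\pmod 4$.
\end{itemize}
We also use that edge-disjoint blocks share at most one vertex.

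The four cases are then excluded as follows. For $(0,3)$ and $(0,8)$, every vertex in a quadruple has $\beta_x\ge 4$, so the quadruples through $x$ cover at least $1+3\cdot 4=13$ vertices. Each of these vertices also has $\beta\ge 4$, which gives $4\beta=\sum_x\beta_x\ge 52$, contradicting $\beta\le 8$. For $(4,1)$, a vertex of the unique quadruple has $\beta_x=1$, and no value of $\alpha_x$ is compatible with that residue. For $(2,2)$, a quadruple vertex must have $1\le\beta_x\le 2$. By the residues above this forces $\alpha_x$ odd and $\beta_x=2$, so all four vertices of one quadruple lie in both quadruples, which is impossible because the two quadruples share at most one vertex. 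Hence $7\alpha+4\beta\ge 42$, and $b\ge (v^2-v+84)/20$.

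The main obstacle is the existence part, the empty minimum excess, since it requires an infinite family of constructions rather than counting. That is why I would rely on the recursive constructions of \cite{k3k4k5paper} and not re-prove it here. The lower bound only needs the careful enumeration of residual cases above.
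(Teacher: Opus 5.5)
Your argument is correct. It differs from the paper mainly in that the paper gives no proof of this statement: it imports the whole theorem from \cite{k3k4k5paper}, and adds only the remark that for $v=33$ the empty excess follows from a Steiner triple system of order $33$. You instead split the statement. You cite \cite{k3k4k5paper} only for existence and prove the lower bound yourself.

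Each step of your lower-bound proof checks out:
\begin{itemize}
\item the identity $10b=\binom v2+7\alpha+4\beta$;
\item the residue $7\alpha+4\beta\equiv 2\pmod{10}$;
\item the four residual pairs $(0,3),(2,2),(0,8),(4,1)$;
\item their elimination via the vertex congruence $2\alpha_x+3\beta_x\equiv 0\pmod 4$.
\end{itemize}

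Proving the bound yourself buys something the citation hides. It shows that the equality cases $7\alpha+4\beta=42$ are exactly $(\alpha,\beta)=(6,0)$ and $(2,7)$. These are precisely the two compositions $(6,0,51)$ and $(2,7,48)$ that the paper isolates for $v=33$. Your argument also uses the same local-congruence technique the paper itself applies in the $58$-block exclusion and in the $K_{18}$, $K_{19}$ warm-ups.

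On the existence side you gain nothing over the paper. The Steiner-triple-system shortcut only covers $v\equiv 1,3\pmod 6$ and fails, for example, at $v=53$. So for the general family the reference is genuinely needed; equivalently, one needs a pairwise balanced design with block sizes $\{3,4,5\}$ on $v$ points. Your description of that construction via group divisible designs is a guess about the reference rather than something you verify.
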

For $v=33$, this implies $\Cxi(33,\{3,4,5\},2)\ge 57$. The minimum excess for $K_{33}$ is clearly empty, since a Steiner triple system of order 33 exists. 

Thus a $57$-block zero-excess decomposition would be extremal. Solving
\[
\alpha+\beta+\gamma=57,
\qquad
3\alpha+6\beta+10\gamma=\binom{33}{2}=528,
\]
we get only two nonnegative integer solutions:
\[
(\alpha,\beta,\gamma)=(2,7,48)
\qquad\text{or}\qquad
(\alpha,\beta,\gamma)=(6,0,51).
\]
The first case is already excluded by the general $v\equiv 13\pmod{20}$ analysis in the mixed-covering paper \cite{k3k4k5paper}. 
\begin{proposition}[\cite{k3k4k5paper}]\label{prop:k3k4k5-v13-a2-b7}
	Let $v\equiv 13\pmod{20}$. Then there is no decomposition of $K_v$ into $2$ $K_3$'s, $7$ $K_4$'s and $(v^2-v-96)/20$ $K_5$'s. 
\end{proposition}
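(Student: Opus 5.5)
The plan is to derive a contradiction by pure counting, using only the local degree equation at each vertex and the fact that blocks of a decomposition pairwise share at most one vertex. No computation should be needed.

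\emph{Step 1: local congruence.} Suppose such a decomposition exists, so $\alpha=2$ and $\beta=7$. For each vertex $x$, counting edges at $x$ gives
\[
2\alpha_x+3\beta_x+4\gamma_x=v-1 .
\]
Since $v\equiv 13\pmod{20}$, we have $v-1\equiv 12\pmod{20}$, so $4\mid v-1$. Hence $2\alpha_x+3\beta_x\equiv 0\pmod 4$. Because $\alpha=2$ and two triangles share at most one vertex, $\alpha_x\in\{0,1,2\}$, and $\beta_x\le 7$. The congruence then forces:
\begin{itemize}[nosep]
\item $\beta_x\in\{0,4\}$ whenever $\alpha_x\in\{0,2\}$;
\item $\beta_x\in\{2,6\}$ whenever $\alpha_x=1$.
\end{itemize}
Moreover $\sum_x\alpha_x=6$, so at most six vertices have $\alpha_x=1$. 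These are the only vertices whose $\beta_x$ can be $2$ or $6$.

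\emph{Step 2: linearity count.} Any two of the seven $K_4$'s share at most one vertex. Counting pairs of quadruples through a common vertex therefore gives
\[
\sum_x\binom{\beta_x}{2}\le\binom{7}{2}=21,
\qquad
\sum_x\beta_x=4\beta=28 .
\]
Let $S$ be the set of (at most six) vertices with $\alpha_x=1$. Every vertex outside $S$ has $\beta_x\in\{0,4\}$.
\begin{itemize}[nosep]
\item If all vertices of $S$ have $\beta_x=2$, they contribute at most $12$ to $\sum\beta_x$. At least $16$ must then come from vertices with $\beta_x=4$, so there are at least four such vertices. They alone contribute at least $4\cdot 6=24>21$ to the pair count, a contradiction.
\item If some vertex of $S$ has $\beta_x=6$, it alone contributes $15$. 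The other at most five vertices of $S$ contribute at most $10$ to $\sum\beta_x$ (or $6$ each when $\beta_x=6$, which only increases the pair count). So at least $12$ must come from vertices with $\beta_x=4$, adding at least $18$ pairs. The total then exceeds $21$.
\end{itemize}
More cleanly, I would phrase this as a convexity argument: minimise $\sum\binom{\beta_x}{2}$ subject to $\sum\beta_x=28$, with $\beta_x\in\{0,4\}$ outside a set of size at most $6$ and $\beta_x\in\{2,6\}$ on it. The minimum is $6\cdot 1+4\cdot 6=30>21$. This contradiction proves the proposition.

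\emph{Expected obstacle.} The only delicate point is Step 1: checking that the congruence really excludes $\beta_x\in\{1,3,5,7\}$ in all three cases of $\alpha_x$, and that $\alpha_x\le 2$. Once the admissible degree pattern is pinned down, the pair-counting bound is routine. If the bound turned out not to be tight enough in some subcase, the fallback would be to also use that the vertices with $\beta_x=4$ have pairwise disjoint sets of twelve quadruple-neighbours inside the $K_4$-support.
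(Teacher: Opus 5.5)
Your proof is correct and complete. Note that it uses only $4\mid v-1$ and the linearity of the seven $K_4$'s, never the number of $K_5$'s. So it actually shows that for every $v\equiv 1\pmod 4$, no $\{K_3,K_4,K_5\}$-decomposition of $K_v$ has exactly two triangles and seven $K_4$'s.

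The paper does not prove this proposition itself; it imports it from the cited mixed-covering paper. The natural comparison is therefore with the arguments the paper does write out in the same spirit, such as the proof of Theorem~\ref{thm:main-59}. There a heavy vertex is excluded because all its block-neighbours are forced to be heavy too, which overshoots the incidence total $\sum_x\beta_x$. That local route works here as well:
\begin{itemize}
\item A vertex with $\beta_x=4$ has $12$ quadruple-neighbours, at most six of them in $S$, so $\sum_x\beta_x\ge 4+6\cdot 2+6\cdot 4>28$.
\item If no vertex has $\beta_x=4$, all seven $K_4$'s lie on the at most six vertices of $S$, which is absurd.
\end{itemize}

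Your inequality $\sum_x\binom{\beta_x}{2}\le\binom{7}{2}$ replaces this case split by one global count. Write $k$ for the number of vertices of $S$ with $\beta_x=6$ and $m$ for the number with $\beta_x=4$. The constraint $2k+|S|+2m=14$ turns the pair count into $42+8k-2|S|\ge 30>21$. This also makes your somewhat loose second bullet unnecessary.

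One aside: the fallback in your last paragraph is false as stated. Two vertices with $\beta_x=\beta_y=4$ among seven quadruples must lie in a common quadruple, so their neighbour sets intersect. You never use it, though.
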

Hence the only remaining $57$-block case is
$(\alpha,\beta,\gamma)=(6,0,51)$,
that is, a decomposition of $K_{33}$ into $51$ copies of $K_5$ and $6$ copies of $K_3$.
This is the case studied algorithmically in the rest of the paper.

\begin{remark}
A later short counting argument also excludes the $58$-block possibilities, so the current numerical lower bound for order 33 is in fact $59$. We include that short argument at the end, but the present paper is centred on the algorithmic exclusion of the $57$-block case, since that is where the new search framework enters.
\end{remark}

\section{Direct formulations and benchmark protocol}\label{sec:direct-benchmarks}

Before developing the layered proof, it is useful to record what actually happens when one insists on a monolithic formulation. The purpose of these experiments is not to prove nonexistence by timeout. Rather, they document that the raw $K_{33}$ instance is too large and too symmetric for a one-shot exact attack, and hence justify the progressive strategy adopted later.

We benchmark the unique $57$-block arithmetic case $(\alpha,\beta,\gamma)=(6,0,51)$, so we already freeze $\beta=0$ and use only candidate $K_3$- and $K_5$-blocks. This still gives
\[
\binom{33}{3}+\binom{33}{5}=5~456+237~336=242~792
\]
candidates in total. All three runs were carried out on the same workstation with an AMD Ryzen~7~5700G CPU under Ubuntu~24.04.4 with a common cutoff of $3600$ seconds and a single-thread policy. We implemented the following three deliberately direct formulations.
\begin{itemize}[leftmargin=1.8em]
\item \textbf{ILP}. One binary variable is used for each candidate $K_3$ or $K_5$. The model contains one equality for each of the $528$ edges of $K_{33}$, one equation fixing the number of triangles, one equation fixing the number of $K_5$'s, and $33$ redundant vertex-degree equalities.
\item \textbf{DLX}. We use one row for each candidate $K_3$ or $K_5$ and one primary column for each edge. The exact triangle quota is enforced during the search, so this is a direct exact-cover search with a fixed number of triples.
\item \textbf{SAT}. We use one Boolean variable for each candidate $K_5$, together with six labelled triangle slots, each slot choosing exactly one of the 5,456 possible triangles. Exact edge coverage is encoded by CNF exactly-one constraints, and triangles are forbidden to repeat across the six slots. This is intentionally direct and therefore keeps the additional $6!$ symmetry of the labelled slots.
\end{itemize}

Table~\ref{tab:direct-benchmarks} summarises the resulting model sizes and outcomes. The full benchmark data is deposited in \cite{k33-benchmark}. In all three paradigms the run reaches the one-hour cutoff without producing a certificate.

\begin{table}[ht]
\centering
\caption{Monolithic one-hour benchmarks for the $K_{33}$ case $(\alpha,\beta,\gamma)=(6,0,51)$.}
\label{tab:direct-benchmarks}
\small
\begin{tabular}{@{}l p{5.3cm} p{2.9cm} c p{3.0cm}@{}}
\toprule
solver & model size & preprocessing/build & cutoff & outcome \\
\midrule
ILP & $242{,}792$ binary vars, $563$ constraints, $3{,}835{,}568$ nonzeros & build $2.21$s & $3600$s & timeout; no feasible incumbent \\
DLX & $242{,}792$ rows, $528$ primary columns & direct enumeration & $3600$s & timeout; no exact cover \\
SAT & $2{,}773{,}842$ CNF vars, $7{,}593{,}150$ clauses & CNF generation $2.50$s & $3600$s & timeout; no certificate \\
\bottomrule
\end{tabular}
\normalsize
\end{table}

The cutoff statistics are equally instructive. Gurobi explores $879$ branch-and-bound nodes and performs $5{,}021{,}453$ simplex iterations, but never finds even one feasible incumbent. The direct DLX search visits $4{,}677{,}632$ nodes, performs $866{,}460{,}362{,}139$ compatibility checks, and reaches best partial depth $46$, still far from the required $57$ blocks. The SAT encoding is already large at input level: Kissat parses a $146$ MB DIMACS file with header
\[
\texttt{p cnf 2773842 7593150},
\]
and after the full one-hour budget still returns neither \texttt{sat} nor \texttt{unsat}.

These experiments confirm the point of Section~2.4. Even after the first arithmetic reduction to $(6,0,51)$, the direct search space is still enormous and highly symmetric. Monolithic ILP, DLX, and SAT formulations are certainly possible, but by themselves they do not expose the local structure responsible for infeasibility. This is why the remainder of the paper replaces one large exact search by a sequence of smaller exact filters.

\section{The layered search for the $57$-block case}

We now describe the actual search. Assume that $K_{33}$ is decomposed into $51$ copies of $K_5$ and $6$ copies of $K_3$.
\subsection{The six triangles live on a distinguished $K_9$}
Checking degree of any vertex $x$:
$$32 = 2\alpha_x + 4\gamma_x,$$
we get $\alpha_x\equiv 0\pmod{2}$. 
We now assert that there is no vertex $x$ with $\alpha_x\ge 4$. Otherwise, there are at least 8 vertices $y$ other than $x$ with $\alpha_y>0$, hence
$$\alpha\ge \frac{1}{3}\left(4 + 2\cdot 8\right)>6,$$
a contradiction!
So we have exactly 9 vertices each contained in 2 triples, and the remaining part is decomposed into quintuples. 

Build an auxiliary graph $G$ whose vertices are those 6 triangles in the decomposition, and two triangles are joined by an edge if and only if they share a vertex. Clearly, $G$ is a cubic graph of order 6. Hence $\overline{G}\in \{C_6, 2C_3\}$. So $G\in\{\operatorname{GP}(3,1),K_{3,3}\}$, where $\operatorname{GP}(3,1)$ is the triangular prism graph. 

So without loss of generality, we have just two possible patterns of the 6 triangles on 9 vertices: 
\begin{itemize}
	\item $L(K_{3,3})=K_3\square K_3$: $(1,2,3), (4,5,6), (7,8,9), (1,4,7), (2,5,8), (3,6,9)$. We shall call it the grid pattern.
	\item $L(\operatorname{GP}(3,1))$: $(1,3,7), (1,2,8), (2,3,9), (4,6,7), (4,5,8), (5,6,9)$. We shall call it the prism pattern.
\end{itemize}

A useful observation is about the automorphism of the two triangle patterns on 9 vertices:
\begin{lemma}\label{lem:aut-t}
	$\operatorname{Aut}(K_3\square K_3)=\operatorname{Aut}(K_{3,3})=(\mathfrak S_3\times \mathfrak S_3)\rtimes {\mathbb Z/2\mathbb Z}$, and $\operatorname{Aut}(L(\operatorname{GP}(3,1)))=\operatorname{Aut}(\operatorname{GP}(3,1))=\operatorname{Dih}_6={\mathbb Z/2\mathbb Z}\times {\mathfrak S_3}$.
\end{lemma}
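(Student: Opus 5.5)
The plan is to prove both equalities at once by identifying each triangle pattern with the \emph{edge/star structure} of its auxiliary cubic graph $G$ from the preceding paragraph. I then compute $\operatorname{Aut}(G)$ by passing to the complement.

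First I make the correspondence precise. In either pattern each of the $9$ vertices lies in exactly two of the $6$ triangles, since $\alpha_x=2$ for all nine of them. Two distinct triangles share at most one vertex, because the decomposition is edge-disjoint. Hence the map sending a vertex $x$ to the pair of triangles containing it is a map from the $9$ vertices to the edges of $G$. This map is injective, since two vertices lying in the same two triangles would force those triangles to share an edge. As $G$ is cubic of order $6$, it has exactly $9$ edges, so the map is a bijection. Under this bijection, the triangle $T$ corresponds to the star of $T$ in $G$, that is, the three edges incident to the vertex $T$. So each pattern is precisely the hypergraph whose points are $E(G)$ and whose blocks are the vertex stars of $G$. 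In particular $K_3\square K_3=L(K_{3,3})$ and the prism pattern is $L(\operatorname{GP}(3,1))$, with triangles of the pattern corresponding to the maximal cliques coming from stars.

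Next I compare automorphisms. An automorphism $\sigma$ of the triangle pattern permutes the $6$ triangles and preserves the relation ``share a vertex''. It therefore induces an automorphism $\hat\sigma\in\operatorname{Aut}(G)$. The homomorphism $\sigma\mapsto\hat\sigma$ is injective, because each vertex is recovered as the unique common point of its two triangles. It is surjective, because any $\phi\in\operatorname{Aut}(G)$ permutes edges and maps stars to stars, and so defines an automorphism of the pattern. Thus the automorphism group of the pattern is $\operatorname{Aut}(G)$. The same holds for $\operatorname{Aut}(L(G))$ as a graph: Whitney's theorem applies because $G$ is connected and not $K_3$ or $K_{1,3}$. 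Alternatively, one can check directly that the only triangles of $L(G)$ are the stars, since $G$ is triangle-free in the grid case. In the prism case, the two triangles of $G$ give two extra triangles of $L(G)$, and these must be handled separately; this is why I phrase the statement via the triangle configuration.

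Finally I compute $\operatorname{Aut}(G)$ via $\operatorname{Aut}(G)=\operatorname{Aut}(\overline G)$.
\begin{itemize}
\item For $G=K_{3,3}$, the complement is $\overline G=2C_3=2K_3$. Its automorphism group is the wreath product $\mathfrak S_3\wr\mathbb Z/2\mathbb Z=(\mathfrak S_3\times\mathfrak S_3)\rtimes\mathbb Z/2\mathbb Z$, of order $72$.
\item For $G=\operatorname{GP}(3,1)$, the complement is $\overline G=C_6$. Its automorphism group is the dihedral group of order $12$. This is isomorphic to $\mathbb Z/2\mathbb Z\times\mathfrak S_3$, since the rotation by $3$ is central and the quotient acts as $\mathfrak S_3$ on the three antipodal pairs (the rungs of the prism).
\end{itemize}

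The only delicate point is the prism case. There I must make sure the claimed group is the automorphism group of the six-triangle configuration, and not of the line graph with its two extra triangles. I would handle this by stating the identification at the level of the triangle configuration, and by checking directly that the $12$ maps obtained preserve the listed triples.
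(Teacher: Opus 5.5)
Your proof is correct, and it is more complete than the paper's, which only sketches this lemma. The paper describes $\operatorname{Aut}(K_{3,3})$ (permute each bipartition class, swap the classes) and $\operatorname{Aut}(\operatorname{GP}(3,1))$ (permute the two triangular layers simultaneously, swap the layers). It then transfers both to the line graphs by citing Whitney's theorem as a black box.

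You instead prove directly that automorphisms of the six-triangle configuration correspond bijectively to $\operatorname{Aut}(G)$, using only that each point is the unique common point of its two triangles. You then read off $\operatorname{Aut}(G)$ from the complements $2K_3$ and $C_6$, so Whitney is needed only for the literal claim about $L(G)$ as a graph.

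Your route establishes the fact for exactly the object the search uses: the permutation group of $\{1,\dots,9\}$ preserving the six triangles, which is what acts on families of type-$3$ triples in the orbit computations. It does so without depending on the precise hypotheses of Whitney's theorem. The paper's route is shorter, and viewing the prism as $K_3\square K_2$ makes the product $\mathbb Z/2\mathbb Z\times\mathfrak S_3$ visible at once.

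Three small repairs are worth making.
\begin{enumerate}
\item The automorphism form of Whitney's theorem has exceptions $K_2$, $K_4$, $K_4-e$ and $K_{1,3}+e$. The pair $K_3,K_{1,3}$ is the exception for the isomorphism form. This is harmless here, since both of your graphs have six vertices.
\item ``$r^3$ is central with quotient $\mathfrak S_3$'' does not by itself give a direct product; the dicyclic group of order $12$ has the same property. Add the action of $\operatorname{Aut}(C_6)$ on the two colour classes of $C_6$. These are the two triangular layers of the prism, and $r^3$ swaps them. The combined homomorphism to $\mathfrak S_3\times\mathbb Z/2\mathbb Z$ is then injective, hence bijective by counting orders.
\item Your hedge in the prism case is unnecessary. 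In $L(\operatorname{GP}(3,1))$ the rung vertices $7,8,9$ lie in two triangles each, while $1,\dots,6$ lie in three. So the two extra triangles $\{1,2,3\}$ and $\{4,5,6\}$ are exactly the triangles avoiding $\{7,8,9\}$. Every graph automorphism therefore preserves the six listed triples, and the lemma holds as stated for the line graph itself.
\end{enumerate}
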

\begin{proof}
	We will omit the rigorous algebraic proof. For $K_{3,3}$, every automorphism can permute vertices inside each bipartition class, as well as swap the 2 classes. For the prism $\operatorname{GP}(3,1)$, every automorphism can permute vertices on both layers simultaneously, as well as swap the 2 triangular layers. The lemma then follows from the following version of Whitney's line graph theorem.
\end{proof}
\begin{theorem}[\cite{Whitney_1932, Hemminger_1972}]
	If \(G\) is a finite simple connected graph with at least \(3\) vertices, then the natural homomorphism
	\[
	\mathrm{Aut}(G)\to \mathrm{Aut}(L(G))
	\]
	is an isomorphism.
\end{theorem}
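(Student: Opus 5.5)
The statement is Whitney's theorem (in the form due to Whitney and Hemminger). The natural map $\phi:\mathrm{Aut}(G)\to\mathrm{Aut}(L(G))$ sends $\sigma$ to the permutation $uv\mapsto \sigma(u)\sigma(v)$ of the edges. This preserves adjacency of edges, because two edges share an endpoint if and only if their images do. It is clearly a group homomorphism. The proof then splits into injectivity and surjectivity; the hypotheses (connected, at least $3$ vertices) enter in both parts.

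\textbf{Injectivity.} Suppose $\sigma$ fixes every edge setwise. Take a vertex $v$. Since $G$ is connected with at least $3$ vertices, $v$ has a neighbour $w$, and the connected pair $\{v,w\}$ has a further neighbour. So some edge $vw$ is adjacent to a second edge. That edge meets $vw$ in exactly one vertex, say $vx$ or $wx$.

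If $v$ has degree at least $2$, then $v$ is the unique common endpoint of two of its edges. Since $\sigma$ fixes both edges, it fixes $v$. If $v$ has degree $1$, its neighbour $w$ has degree at least $2$ and is therefore fixed. Then $\sigma(v)$ is the endpoint of the fixed edge $vw$ other than $w$, so $\sigma(v)=v$. Hence $\sigma$ is the identity.

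\textbf{Surjectivity.} Let $\tau\in\mathrm{Aut}(L(G))$; we must recover a vertex map from it. Each vertex $v$ of degree at least $2$ determines a star $S_v$, the set of edges at $v$, which is a clique in $L(G)$. The other maximal-type cliques in $L(G)$ are triangles of $G$, giving $3$-cliques. Write $d(e)$ for the degree of an edge $e$ in $L(G)$.

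The key step is to show that $\tau$ maps stars to stars. For cliques of size at least $4$ this is automatic, since such cliques can only be stars. The obstacle is the $3$-cliques, where a star $K_{1,3}$ and a triangle $K_3$ both become triangles in $L(G)$. I would distinguish them locally. An edge outside a triangle $\{e_1,e_2,e_3\}$ of $G$ is adjacent to an even number (0 or 2) of the $e_i$. In contrast, for a star at $v$ an outside edge can meet exactly one of the three edges. Using this parity criterion, one shows that $\tau$ preserves the property of being a star-triangle. The exception is when no outside edge exists, i.e.\ $G\in\{K_3,K_{1,3}\}$, and $G=K_4$ must also be handled. Here Whitney's famous exception arises: $L(K_3)=L(K_{1,3})$, but these are non-isomorphic graphs, so the statement for automorphisms of a single $G$ still requires checking. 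Also $\mathrm{Aut}(K_4)\cong\mathfrak S_4=\mathrm{Aut}(L(K_4))$, since $L(K_4)$ is the octahedron whose automorphism group has order $48$. This last case needs care, and $K_4$ should be treated separately (it is irrelevant for the applications in Lemma~\ref{lem:aut-t}).

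With stars mapped to stars, define $\sigma(v)$ to be the centre of $\tau(S_v)$ for vertices of degree at least $2$. For a leaf $v$ with edge $vw$, let $\sigma(v)$ be the other endpoint of $\tau(vw)$ away from $\sigma(w)$.

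It remains to check three things. First, $\sigma$ is a bijection, since $\tau^{-1}$ yields its inverse. Second, $e=uv$ lies in $S_u\cap S_v$, so $\tau(e)$ lies in $\tau(S_u)\cap\tau(S_v)$, which means $\tau(e)=\sigma(u)\sigma(v)$ and $\phi(\sigma)=\tau$. Third, $\sigma$ preserves adjacency for the same reason.

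For the paper's purposes, a direct check suffices instead. $K_{3,3}$ and $\operatorname{GP}(3,1)$ are connected, and for each one can verify that $|\mathrm{Aut}|$ equals the order of the automorphism group of the line graph ($72$ and $12$). Together with injectivity, this gives the isomorphism in exactly the cases used in Lemma~\ref{lem:aut-t}. I expect the star/triangle distinction to be the main obstacle in the general proof.
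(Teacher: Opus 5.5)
There is a genuine gap, and it cannot be closed, because the statement as written is false. Your own numbers show this. You note that $L(K_4)$ is the octahedron with $48$ automorphisms, but $\mathrm{Aut}(K_4)\cong\mathfrak S_4$ has order $24$. So the natural map is injective but not surjective, and writing $\mathfrak S_4=\mathrm{Aut}(L(K_4))$ is wrong. Concretely, the antipodal map $ab\leftrightarrow cd$, $ac\leftrightarrow bd$, $ad\leftrightarrow bc$ swaps the star $\{ab,ac,ad\}$ with the triangle $\{bc,bd,cd\}$, and no vertex permutation induces it.

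Your list of exceptions also misses two more connected graphs on four vertices:
\begin{itemize}
\item the paw $K_{1,3}+e$ has $|\mathrm{Aut}|=2$, but its line graph is the diamond $K_4-e$, with $|\mathrm{Aut}|=4$;
\item the diamond has $|\mathrm{Aut}|=4$, but its line graph is the wheel $K_1+C_4$, with $|\mathrm{Aut}|=8$.
\end{itemize}
The correct form of Whitney's theorem excludes $K_2$, $K_{1,3}+e$, $K_4-e$ and $K_4$; in particular it holds for every connected graph on at least $5$ vertices. The paper only cites the result, with these exceptions omitted, so there is no proof there for you to match.

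The failure sits exactly at your parity step. A triangle of $G$ does give an even $3$-clique in $L(G)$, since every other edge meets $0$ or $2$ of its edges. But a star triple $\{vx,vy,vz\}$ is also even whenever $\deg v=3$ and every other edge at $x,y,z$ stays inside $\{x,y,z\}$. By connectivity this happens precisely when $G$ is $K_{1,3}$ plus some edges among its leaves, i.e.\ $K_{1,3}$, the paw, the diamond or $K_4$. So the exceptional situation is not just ``no outside edge exists''.

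If you add the hypothesis $|V(G)|\ge 5$, the argument works. Connectivity then gives a vertex outside $\{v,x,y,z\}$. If $\deg v\ge 4$, another edge at $v$ meets all three; if $\deg v=3$, some edge meets exactly one of $vx,vy,vz$. Either way star triples are odd, so $\tau$ preserves them. The rest of your reconstruction then goes through, with the usual care at degree-$2$ vertices, where $\sigma(v)$ is the common endpoint of the two image edges.

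Your fallback for what the paper actually uses is valid. Injectivity, which you prove correctly for all connected graphs on at least $3$ vertices, together with the order counts $72$ and $12$ for $K_{3,3}$ and $\operatorname{GP}(3,1)$, is all that Lemma~\ref{lem:aut-t} needs. It does not prove the statement as posed, and nothing can.
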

\subsection{Type-$s$ quintuples and the parameter $n_3$}
Let $T$ denote the union of these 6 triangles, notice that the clique number $\omega(K_9-T)=3$ in both cases. So in the decomposition of $K_{33}$, there are no $K_5$'s with at least 4 vertices from $V(T)$. Denote by $n_s$ the number of $K_5$'s with exactly $s$ vertices from $V(T)$ (and $5-s$ vertices from $V(K_{33})\setminus V(T)$), and we call such $K_5$'s the type-$s$ quintuples. Notice that only type-0, type-1, type-2, type-3 quintuples exist. Then
\begin{align}\label{eq:k33-1}
	n_2+3n_3=e(K_9)-6\cdot 3=18\\
	n_1 + 2n_2 +3n_3 = 7\cdot 9 = 63 \\
	n_0+n_1+n_2+n_3 = 51.
\end{align}
And there cannot be more independent equations. Clearly $n_3\in\{0,1,\cdots,6\}$ by \eqref{eq:k33-1}, and the solution is
\begin{align}\label{eq:k33-4}
	(n_0,n_1,n_2,n_3)=(6-n_3,27+3n_3,18-3n_3,n_3).
\end{align}
In fact, we can also show:
\begin{lemma}
	$n_3\le 4$ in both the grid and the prism patterns.
\end{lemma}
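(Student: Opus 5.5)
The plan is to argue purely by local counting at the $24$ vertices outside $V(T)$, without using which of the two patterns occurs. Fix an outer vertex $y\in V(K_{33})\setminus V(T)$. Since every triangle of the decomposition lies inside $V(T)$, we have $\alpha_y=0$, so $y$ lies in exactly $\gamma_y=8$ quintuples. Write $t_s(y)$ for the number of type-$s$ quintuples containing $y$. Each type-$s$ quintuple through $y$ covers exactly $s$ of the $9$ edges from $y$ to $V(T)$. Each of those $9$ edges is covered exactly once. This gives the two local equations
\[
t_0(y)+t_1(y)+t_2(y)+t_3(y)=8,\qquad t_1(y)+2t_2(y)+3t_3(y)=9 .
\]
Eliminating $t_1(y)$ yields the key identity
\[
t_0(y)=t_2(y)+2t_3(y)-1\qquad\text{for every outer vertex } y .
\]

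Next I sum over the $24$ outer vertices, using that a type-$s$ quintuple contains exactly $5-s$ outer vertices. This gives $\sum_y t_3(y)=2n_3$ and $\sum_y t_0(y)=5n_0=5(6-n_3)$ by \eqref{eq:k33-4}. Two consequences of the key identity do the work: every outer vertex satisfies $t_0(y)\ge 0$, and every outer vertex with $t_3(y)\ge1$ satisfies $t_0(y)\ge 2t_3(y)-1\ge t_3(y)$.

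Now suppose $n_3\ge 5$ and split into two cases.

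\textbf{Case $n_3=6$.} Then $n_0=0$, so $t_0(y)=0$ for all $y$. The key identity becomes $t_2(y)+2t_3(y)=1$, which forces $t_3(y)=0$ for every outer vertex. This contradicts $\sum_y t_3(y)=12>0$.

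\textbf{Case $n_3=5$.} Then $\sum_y t_0(y)=5$. On the other hand, summing $t_0(y)\ge t_3(y)$ over the vertices with $t_3(y)\ge 1$ gives
\[
\sum_y t_0(y)\;\ge\;\sum_y t_3(y)=10>5,
\]
again a contradiction. Hence $n_3\le 4$ in both patterns.

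The only delicate point is the bookkeeping: one has to confirm that outer vertices meet no triangles, and that the contributions $s$ and $5-s$ per type-$s$ quintuple are correct. I would also check that the summed identities are consistent with \eqref{eq:k33-4}. Indeed, $\sum_y\bigl(t_2(y)+2t_3(y)-1\bigr)=3n_2+4n_3-24=30-5n_3=5n_0$, so the global counts carry no information and the contradiction comes entirely from the per-vertex inequalities. If this local argument unexpectedly failed, the fallback would be to use the geometry of $K_9-T$: the inner triangles of type-$3$ quintuples are edge-disjoint triangles there, and those sharing an outer vertex must be vertex-disjoint. I do not expect to need this.
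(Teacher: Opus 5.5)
Your proof is correct and is essentially the paper's argument. Both proofs sum a per-vertex consequence of the two local equations over the $24$ outer vertices against the global counts $5n_0$ and $3n_2$ (resp.\ $2n_3$). Your inequality $t_0(y)\ge t_3(y)$ and the paper's two-sided bound on the number $m$ of vertices with $n_0(x)=0$ both encode $t_2(y)+t_3(y)\ge 1$, and both yield the same bound $7n_3\le 30$. Since $t_0(y)\ge t_3(y)$ holds for every outer vertex, you can also drop the case split and conclude directly from $5(6-n_3)\ge 2n_3$.
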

\begin{proof}
	Consider any vertex $x\notin V(T)$, then $\gamma_x=8$. Let $n_s(x)$ denote the number of type-$s$ $K_5$'s containing $x$, then
	\begin{align}\label{eq:nsx-1}
		n_0(x)+n_1(x)+n_2(x)+n_3(x)=8\\
		n_1(x) +2n_2(x) +3n_3(x)=9 \label{eq:nsx-2}
	\end{align}
	and we have 11 possible solutions of $(n_0(x),n_1(x),n_2(x),n_3(x))$, as shown in Table \ref{tab:k33-local}.
	
	\begin{table}[]
		\centering
		\caption{Possible solutions for local profiles}
		\label{tab:k33-local}
		\begin{tabular}{cccc}
			$n_0(x)$ & $n_1(x)$ & $n_2(x)$ & $n_3(x)$ \\ \hline
			0 & 7&1&0\\
			1&5&2&0\\
			1&6&0&1\\ 
			2&3&3&0\\
			2&4&1&1\\
			3&1&4&0\\
			3&2&2&1\\
			3&3&0&2\\ 
			4&0&3&1\\ 
			4&1&1&2\\
			5&0&0&3
		\end{tabular}
	\end{table}
	
	Suppose $m$ vertices outside $V(T)$ have $n_0(x)=0$, 
	then by \eqref{eq:k33-4},
	$$24-m\le \sum_{x\notin K_9} n_0(x)=5n_0 = 30-5n_3,$$
	so $m\ge 5n_3-6$. And since these $m$ vertices all have $n_2(x)=1$, by \eqref{eq:k33-4},
	$$m\le \sum_{x\notin K_9}n_2(x)=3n_2=54-9n_3,$$
	so $5n_3-6\le 54-9n_3$, and we get $n_3\le 4$. 
\end{proof}

So globally, the distribution of $K_5$'s in \eqref{eq:k33-4} has 5 possibilities: 
\begin{align}
	(n_0,n_1,n_2,n_3)=(6,27,18,0),(5,30,15,1),(4,33,12,2),(3,36,9,3),(2,39,6,4).
\end{align}

\subsection{The philosophy of the layered search}

The layered method has the following generic form. We shall define and explain the terms in detail in Section~6.4 using the prism $n_3=2$ case. 
\begin{enumerate}[leftmargin=1.8em]
\item Fix the triangle pattern (grid or prism) on the distinguished $K_9$ and, when relevant, work up to the automorphism group of that pattern.
\item Enumerate admissible local signatures of the outside vertices. A local signature records how an outside vertex can participate in type-$2$ and type-$3$ quintuples relative to the fixed $K_9$-skeleton, and satisfies one of the local patterns in Table~\ref{tab:k33-local}.
\item Assemble these local signatures into global profile distributions satisfying the exact counting equations such as \eqref{eq:sum-nix}.
\item Apply a type-$0$ geometry filter concerning how the $K_5$'s are placed outside the distinguished $K_9$. This is a purely combinatorial test that discards global distributions that cannot be realised by the required number of type-$0$ quintuples.
\item Only now form SAT instances for the surviving branches.
\item Decode each satisfying assignment back into the covering language and perform a separate edge-disjointness audit. If some edges are covered twice, the branch fails the verification and should be discarded.
\item If a tiny residual branch survives, fix the already determined structure and solve only the remaining problem of decomposition into type-1 $K_5$'s.
\end{enumerate}

Thus SAT is not used as a whole proof. It is only one layer in a larger exact pipeline.

\subsection{A representative standard case: the prism pattern with $n_3=2$}
We illustrate our computational proof scheme in detail using the prism case
\((n_0,n_1,n_2,n_3)=(4,33,12,2)\).
This is the most representative standard case: it contains a nontrivial split into two $K_9$-orbits, a full arithmetic stage, an exact type-$0$ geometry filter, a SAT realisation stage, as well as a decoding-and-audit stage.

	Recall that in this case the 6 triangles in the distinguished $K_9$ are
	\(137,\ 128,\ 239,\ 467,\ 458,\ 569\).
	The uncovered $K_9$-edges are precisely those not lying in these 6 triangles.  A type-$3$ $K_5$ associates 2 vertices outside $V(T)$ with a $3$-subset of $V(T)$ all of whose 3 edges are not in $E(T)$, hence we may use the terms \emph{type-3 triples}. The set of all type-3 triples on the distinguished $K_9$ is denoted by $\mathcal T_3$. 
	
	\paragraph*{Identification of the \(K_9\)-orbits.}
	The group \(\operatorname{Aut}(T)\) acts naturally on the vertex set \(\{1,\dots,9\}\), hence also on all \(3\)-subsets of \(\{1,\dots,9\}\), and therefore on all admissible families of type-\(3\) triples.
	Two such families which lie in the same \(G\)-orbit are equivalent for our search, since they lead to isomorphic residual instances.
	It is therefore sufficient to choose one representative from each orbit.
	
	In the present case \(n_3=2\), we first enumerate all triples on the distinguished \(K_9\) that can serve as type-\(3\) blocks while remaining compatible with the fixed prism pattern.
	This yields exactly 4 legal type-\(3\) triples: 149, 257, 368, 789.
	Since \(n_3=2\), we must choose two of these four triples, so there are initially
	\(\binom{4}{2}=6\)
	raw candidates. 
	To determine the orbits, we let the automorphism group from Lemma \ref{lem:aut-t} act on the six \(2\)-subsets of the 4 legal type-\(3\) triples. The resulting orbit partition is displayed in Table~\ref{tab:action-orbit-prism-2n3}. A convenient distinguishing invariant is the multiplicity profile of the nine vertices under the 2 chosen triples.
	\begin{table}[]
		\centering
		\begin{tabular}{ccl}
			\toprule
			pair of type-\(3\) triples & multiplicity of vertices & orbit \\
			\midrule
			149, 257 & \(1,1,0,1,1,0,1,0,1\) & \(\mathcal O_1\) \\
			149, 368 & \(1,0,1,1,0,1,0,1,1\) & \(\mathcal O_1\) \\
			257, 368 & \(0,1,1,0,1,1,1,1,0\) & \(\mathcal O_1\) \\
			\midrule
			149, 789 & \(1,0,0,1,0,0,1,1,2\) & \(\mathcal O_2\) \\
			257, 789 & \(0,1,0,0,1,0,2,1,1\) & \(\mathcal O_2\) \\
			368, 789 & \(0,0,1,0,0,1,1,2,1\) & \(\mathcal O_2\) \\
			\bottomrule
		\end{tabular}
		\caption{Action of \(\mathrm{Aut}(T)\) on the 6 pairs of legal type-\(3\) triples in the prism case \(n_3=2\).}
		\label{tab:action-orbit-prism-2n3}
	\end{table}
	The 3 pairs in the first block form one orbit, and the 3 pairs in the second block form the other. In particular, the 6 raw candidates split into exactly 2 \(\mathrm{Aut}(T)\)-orbits, represented by $\mathcal O_1=\{\{1,4,9\},\{2,5,7\}\}$ and $\mathcal O_2=\{\{1,4,9\},\{7,8,9\}\}$. We now analyse the 2 orbits separately.

	\paragraph*{Orbit 1: local signatures.}
	Fix the 2 type-3 triples 149 and 257 in the first orbit, then there are 12 leftover pairs inside the distinguished $K_9$: 15, 16, 24, 26, 34, 35, 36, 38, 68, 78, 79, 89. They must all be covered by some $K_5$'s of type 2, so we call them \emph{type-2 pairs}. The set of all type-2 pairs on the distinguished $K_9$ is denoted by $\mathcal P_2$. For a single outside vertex $x\notin V(T)$, a \emph{local signature} $\sigma$ records which leftover type-$2$ pairs and which fixed type-$3$ triples share a quintuple with $x$, hence $\sigma = (\pi_\sigma, \tau_\sigma)\in \mathcal P_2\times \mathcal T_3$.
	Clearly, in one local signature for $x$, all pairs and triples must be pairwise disjoint. The vertices in $K_9$ that are not present in a local signature for $x$ must each be joined to $x$ via a type-1 $K_5$, and we call them the \emph{type-1 singles for $x$}.
	Once a local signature is chosen for $x$, $n_2(x)$ and $n_3(x)$ are known, and we have 
	\[
	n_1(x)=9-2n_2(x)-3n_3(x),
	\qquad
	n_0(x)=8-n_1(x)-n_2(x)-n_3(x)
	\]
	from \eqref{eq:nsx-1} and \eqref{eq:nsx-2}. Since $n_3(x)\le 2$, we can require $n_0(x)\le 4$ (see Table \ref{tab:k33-local}). In other words, there is a map from the set of local signatures, denoted by $\Sigma$, to the set of local profiles, denoted by $\Pi$: 
	$$\nu: \Sigma\to \Pi, \qquad \sigma\mapsto (n_0(x),n_1(x),n_2(x),n_3(x)).$$
	We now enumerate all possible local signatures for a vertex outside the distinguished $K_9$. This is done by the following steps: 
	\begin{enumerate}
		\item choose a subset of the 2 fixed type-3 triples, either none of them, or one of them, or both of them; 
		\item for each such subset, enumerate every subset of those 12 leftover type-2 pairs disjoint from it; 
		\item keep only those choices in which the selected type-2 pairs are pairwise disjoint; 
		\item compute the resulting local profile $(n_0(x),n_1(x),n_2(x),n_3(x))$ and discard the choice if $n_1(x)<0$, $n_0(x)<0$ or $n_0(x)>4$. 
	\end{enumerate}
	
	The search yields exactly $151$ local signatures for orbit~1. We show an excerpt below.
	
	\small
	\begin{verbatim}
		[1]  pairs=[[1,5]], profile=(0,7,1,0), triples=[], singles=[2,3,4,6,7,8,9]
		[2]  pairs=[[1,6]], profile=(0,7,1,0), triples=[], singles=[2,3,4,5,7,8,9]
		[3]  pairs=[[2,4]], profile=(0,7,1,0), triples=[], singles=[1,3,5,6,7,8,9]
		[4]  pairs=[[2,6]], profile=(0,7,1,0), triples=[], singles=[1,3,4,5,7,8,9]
		[5]  pairs=[[3,4]], profile=(0,7,1,0), triples=[], singles=[1,2,5,6,7,8,9]
		...
		[147] pairs=[[1,6],[3,4],[8,9]], profile=(4,0,3,1), triples=[[2,5,7]], singles=[]
		[148] pairs=[[2,6],[3,5],[7,8]], profile=(4,0,3,1), triples=[[1,4,9]], singles=[]
		[149] pairs=[[3,6]], profile=(4,1,1,2), triples=[[1,4,9],[2,5,7]], singles=[8]
		[150] pairs=[[3,8]], profile=(4,1,1,2), triples=[[1,4,9],[2,5,7]], singles=[6]
		[151] pairs=[[6,8]], profile=(4,1,1,2), triples=[[1,4,9],[2,5,7]], singles=[3]
	\end{verbatim}
	\normalsize
	\paragraph*{Orbit 1: global distributions.}
	The next step is a simple arithmetic stage to enumerate all possible combinations of the local profiles $(n_0(x),n_1(x),n_2(x),n_3(x))$ for the 24 vertices $x\notin V(T)$, so that the global counting equations coming from $(n_0,n_1,n_2,n_3)=(4,33,12,2)$ are satisfied. More specifically, if $c_p$ denotes the multiplicity of the profile type $p=(p_0,p_1,p_2,p_3)$, i.e. $p_s=n_s(x)$ for $0\le s\le 3$, then we require
	\begin{align}\label{eq:sum-nix}
		\sum_p c_p = 24,
		\qquad 
		\sum_p c_p p_0 = 5n_0,
		\qquad
		\sum_p c_p p_1 = 4n_1,
		\qquad
		\sum_p c_p p_2 = 3n_2,
		\qquad
		\sum_p c_p p_3 = 2n_3.
	\end{align}
	These are exactly the totals obtained by summing $n_i(x)$ over all 24 vertices outside the distinguished $K_9$. 
	
	The tuple $(c_p)_p$ where $p$ traverses the list of possible local patterns (see Table \ref{tab:k33-local}) gives a \emph{global distribution} of local profiles. We enumerate all possible global distributions in the following way:
	\begin{enumerate}
		\item let $P$ be the finite list of profile types appearing among the local signatures, hence $|P|\le 11$ (cf. Table \ref{tab:k33-local}); 
		\item recursively assign a multiplicity $c_p\ge 0$ to each $p\in P$;
		\item at each step, prune the search if any partial sum already exceeds one of the target totals $(24,20,132,36,4)$ in \eqref{eq:sum-nix}.
	\end{enumerate}
	This way, we keep exactly those multiplicity vectors satisfying the 4 global equations above in \eqref{eq:sum-nix}. 
	For orbit~1 this produces exactly $931$ arithmetic distributions.  The following is an excerpt. 
	
	\small
	\begin{verbatim}
		[1] {(0,7,1,0):4, (1,5,2,0):16, (1,6,0,1):4}
		[2] {(0,7,1,0):5, (1,5,2,0):14, (1,6,0,1):4, (2,3,3,0):1}
		[3] {(0,7,1,0):5, (1,5,2,0):15, (1,6,0,1):3, (2,4,1,1):1}
		[4] {(0,7,1,0):6, (1,5,2,0):12, (1,6,0,1):4, (2,3,3,0):2}
		[5] {(0,7,1,0):6, (1,5,2,0):13, (1,6,0,1):3, (2,3,3,0):1, (2,4,1,1):1}
		...
		[927] {(0,7,1,0):18, (2,4,1,1):1, (3,1,4,0):2, (4,0,3,1):3}
		[928] {(0,7,1,0):18, (2,3,3,0):1, (3,1,4,0):1, (3,2,2,1):1, (4,0,3,1):3}
		[929] {(0,7,1,0):18, (2,3,3,0):1, (3,1,4,0):2, (4,0,3,1):2, (4,1,1,2):1}
		[930] {(0,7,1,0):18, (2,3,3,0):2, (4,0,3,1):4}
		[931] {(0,7,1,0):18, (1,5,2,0):1, (3,1,4,0):1, (4,0,3,1):4}
	\end{verbatim}
	\normalsize
	\paragraph*{Orbit 1: type-$0$ geometry filter.}
	When generating global distributions of local profiles, we have not considered whether the type-$0$ incidences can actually be arranged on the 4 type-$0$ quintuples ($n_0=4$).  This is what the type-$0$ geometry filter decides.
	For a given distribution, let
	\[
	a_i = \left|\{x:\ n_0(x)=i\}\right|, \qquad i=0,1,2,3,4.
	\]
	The following filter aims at ensuring no two out of the four type-0 $K_5$'s, denoted by $Q_1,Q_2,Q_3,Q_4$, share more than 1 vertex outside the distinguished $K_9$. We proceed with the following steps:
	\begin{enumerate}
		\item given a global distribution, compute $(a_0,a_1,a_2,a_3,a_4)$; 
		\item check the possible type-0 incidences for vertices with $a_4>0$:
		\begin{itemize}
			\item if $a_4=0$, skip;
			\item if $a_4=1$, then this unique vertex belongs to $Q_1\cap Q_2\cap Q_3\cap Q_4$, thus we require $a_2=a_3=0$:
			\begin{itemize}
				\item if $a_2=a_3=0$, jump to step 5;
				\item otherwise, reject the distribution immediately;
			\end{itemize}
			\item if $a_4\ge 2$, reject immediately;
		\end{itemize}
		\item enumerate all feasible type-0 incidences for vertices with $a_3>0$:  
		\begin{itemize}
			\item if $a_3=0$, skip;
			\item if $a_3=1$, loop over $\{Q_1,Q_2,Q_3\}$, $\{Q_1,Q_2,Q_4\}$, $\{Q_1,Q_3,Q_4\}$, $\{Q_2,Q_3,Q_4\}$; 
			\item if $a_3\ge 2$, reject immediately, for these vertices will be commonly in at least 2 type-0 $K_5$'s; 
		\end{itemize}
		\item enumerate all feasible type-0 incidences for vertices with $a_2>0$ that are compatible with the 3 previously type-0 $K_5$'s, if any:
		\begin{itemize}
			\item if $a_3=0$, choose for each vertex with $a_2>0$ one of the 6 possible pairs of type-0 $K_5$'s: $\{Q_1,Q_2\}$, $\{Q_1,Q_3\}$, $\{Q_1,Q_4\}$, $\{Q_2,Q_3\}$, $\{Q_2,Q_4\}$, $\{Q_3,Q_4\}$; if $a_2>6$, reject immediately;
			\item if $a_3=1$ and without loss of generality $\{Q_1,Q_2,Q_3\}$ is selected in the last step, choose for each vertex with $a_2>0$ one of the 3 possible pairs of type-0 $K_5$'s: $\{Q_1,Q_4\}$, $\{Q_2,Q_4\}$, $\{Q_3,Q_4\}$; if $a_2>4$, reject immediately;
		\end{itemize}
		\item check the number of incident vertices for each type-0 $K_5$, they must not exceed 5 each and sum up to $20-a_1$, otherwise reject.
	\end{enumerate}
	
	For orbit~1, exactly $56$ of the $931$ arithmetic distributions survive this filter.  
	
	\paragraph*{Orbit 1: signature feasibility test with SAT.}
	This step tests the feasibility of local signature assignments with respect to a global distribution that passes the type-0 geometry filter. 
	
	Denote by \(U=V(K_{33})\setminus V(T)\), and
	the type-0 geometry filter yields, for a survived distribution $D=(m_p)_{p\in \Pi^+}$, where $\Pi^+:=\{p\in \Pi:p_0>0\}$, a map
	\[
	\theta:U\to \mathcal P(\{1,2,3,4\}),
	\qquad
	\theta(v)=\{\,i\in\{1,2,3,4\}:v\in Q_i\,\}.
	\]
	Let
	\(U^{+}:=\{v\in U:\theta(v)\neq\varnothing\}\).
	For each \(v\in U^{+}\), let \(\Sigma(v):=\{\sigma\in \Sigma: n_0(v)=|\theta(v)|\}\) be the set of local signatures compatible with \(\theta(v)\).
	
	Introduce Boolean variables
	\(
	x_{v,\sigma}\in\{0,1\}
	\)
	for \(v\in U^{+}\) and \(\sigma\in\Sigma(v)\). One may view a satisfying signature assignment as a function
	\[
	\varphi\in \prod_{v\in U^{+}}\Sigma(v),
	\qquad
	x_{v,\sigma}=1 \iff \varphi(v)=\sigma .
	\]
	
	The CNF encodes the following constraints.
	\begin{enumerate}
		\item Each vertex is assigned only 1 signature: 
		\[(\forall v\in U^{+}) \qquad \sum_{\sigma\in\Sigma(v)} x_{v,\sigma}=1.\]
		\item Each local profile is assigned according to the prescribed multiplicity in the distribution:
		\[(\forall p\in \Pi^+)\qquad \sum_{v\in U^{+}}
		\sum_{\Sigma(v)\cap \nu^{-1}(p)}
		x_{v,\sigma}
		=
		m_p .\]
		\item Each type-3 triple is associated with at most 2 outside vertices:
		\[(\forall \tau\in \mathcal T_3)\qquad 	\sum_{v\in U^{+}}
		\sum_{\sigma\in\Sigma(v)\,:\,\tau_\sigma=\tau}
		x_{v,\sigma}
		\le 
		2.\]
		In fact, in this case it is observed that any local profile with $n_3(x)>0$ satisfies $n_0(x)>0$, so the inequality can be replaced by an equality. 
		\item Each type-2 triple is associated with at most 3 outside vertices: 
		\[(\forall e\in \mathcal P_2)\qquad
		\sum_{v\in U^{+}}
		\sum_{\sigma\in\Sigma(v)\,:\,\pi_\sigma=e}
		x_{v,\sigma}
		\le 3.
		\]
		\item Compatibility constraints induced by type-0 geometry: 
		\begin{align*}
			(\forall v,w\in U^+)\ &(\forall \sigma\in \Sigma(v))\ (\forall \rho\in \Sigma(w))\\
			&\left[ v\neq w\  
			\wedge\  \theta(v)\cap \theta(w)\neq\varnothing\ 
			\wedge\ (\pi_\sigma\cup \tau_\sigma)\cap (\pi_\rho\cup \tau_\rho)\neq \varnothing \right]\ 
			\rightarrow\ x_{v,\sigma}+x_{w,\rho}\le 1.	
		\end{align*}
	\end{enumerate}
	
	The solver used in our research is \texttt{kissat} (see \cite{BiereFazekasFleuryHeisinger2020,KissatWeb}).  Among the $56$ branches that survive exact type-$0$ geometry, the SAT solver returns $42$ satisfiable instances (\texttt{SAT}) and $14$ unsatisfiable ones (\texttt{UNSAT}). The \texttt{UNSAT} instances are immediately excluded.  
	
	\paragraph*{Orbit 1: decoding and edge-disjointness audit.}
	Every satisfiable model (\texttt{SAT}) must now be decoded and checked against the global edge-disjointness conditions. Decoding simply means reading off, for each outside vertex, which signature was selected by the positive variables of the SAT model.  
	The audit then verifies that these chosen signatures do not create forbidden edge overlaps.  Specifically, if two type-$2$ quintuples share two outside vertices, or share one outside vertex together with a $K_9$-vertex, then they would overlap in an edge of $K_{33}$ and the branch must be rejected.  Analogous checks are carried out for type-$2$/type-$3$ $K_5$'s and type-$3$/type-$3$ $K_5$'s interactions.
	
	In orbit~1, every one of the $42$ satisfiable branches fails this audit.  In fact, every audited branch fails for the same reason:
	
	\small
	\begin{verbatim}
		two type-2 blocks share at least two outside vertices.
	\end{verbatim}
	\normalsize
	Thus orbit~1 is completely excluded.
	
	\paragraph*{Orbit 2: same procedures.}
	The second orbit is treated in exactly the same way, so we only record the corresponding numbers.
	The local-signature enumeration yields $134$ local signatures, and the arithmetic stage yields $513$ global distributions.  
	Of these, $46$ survive the type-$0$ geometry filter.
	The SAT stage yields $38$ satisfiable instances and $8$ unsatisfiable ones.
	After decoding and edge-disjointness audit, $35$ of the satisfiable branches fail, again because two type-$2$ blocks share at least two outside vertices, and exactly three survive:
	
	\small
	\begin{verbatim}
		dist 6, dist 8, dist 26
	\end{verbatim}
	\normalsize
	These are the only residual branches in the entire case.
	
	\paragraph*{Orbit 2: final type-1 feasibility check for residual branches.}
	For each of the three residual orbit~2 branches, the type-$0$, type-$2$, and type-$3$ $K_5$'s structure is already fixed by the previous stages.  It remains only to complete the uncovered edges by type-$1$ $K_5$'s.  The final script does exactly this by computing the residual uncovered edges, and checking whether each such edge has an admissible type-$1$ candidate. If no immediate contradiction appears, we encode the edges' incidences with type-1 blocks as a CNF and solve in \texttt{kissat}. 
	
	In the first two cases we get immediate negative results:
	
	\small
	\begin{verbatim}
		dist 6  : immediate_unsat, reason = residual edges 
		(9, 10), (10, 25), (10, 27), (10, 33) have no type-1 candidates
		dist 8  : immediate_unsat, reason = residual edges 
		(3, 10), (10, 24) have no type-1 candidates
	\end{verbatim}
	\normalsize
	And for the last case (\texttt{dist 26}), a CNF is generated and the SAT solver returns \texttt{UNSAT}:
	
	\small
	\begin{verbatim}
		c ---- [ banner ] ------------------------------------------------------------
		c
		c Kissat SAT Solver
		...
		c ---- [ parsing ] -----------------------------------------------------------
		...
		c parsed 'p cnf 36927 100624' header
		c closing input after reading 1581792 bytes (2 MB)
		c finished parsing after 0.01 seconds
		c
		c ---- [ solving ] -----------------------------------------------------------
		c
		c seconds switched rate     size/glue tier1  binary        remaining
		c        MB reductions conflicts size  tier2    irredundant
		c         level restarts redundant glue  trail        variables
		c
		c *  0.01  8 0 0 0  0 0   0   0 0.0 0 0 0 0 0% 90213 320 34960 95%
		c lucky 70 units
		c l  0.01  8 0 0 0  0 0   0   0 0.0 0 0 0 0 0% 90213 320 34890 94%
		c (  0.01  8 0 0 0  0 0   0   0 0.0 0 0 0 0 0% 90213 320 34890 94%
		c .  0.02  8 0 0 0  0 0   0   0 0.0 0 0 0 0 0% 82406 320 34890 94%
		c )  0.05 11 0 0 0  0 0   0   0 0.0 0 0 0 0 0% 82406 320 34890 94%
		c {  0.05 11 0 0 0  0 0   0   0 0.0 0 0 2 6 0% 82406 320 34890 94%
			c i  0.05 11 277 0 0  0 9  56  38 2.1 16 7 2 4 12% 82412 320 34889 94%
			c i  0.05 11 276 0 0  0 9  59  38 2.1 15 7 2 4 12% 82414 320 34888 94%
			c i  0.05 11 269 0 0  0 8  66  36 2.1 13 6 2 4 11% 82416 320 34882 94%
			c i  0.05 11 245 0 0  1 11  91  49 1.7 13 8 2 4 11% 82425 320 34881 94%
			c i  0.06 11 132 0 0 26 10 182 114 1.8 15 8 5 10 8% 82432 320 33407 90%
			c }  0.06 11 131 0 0 26 10 184 114 1.8 15 8 5 10 8% 82432 320 33127 90%
		c 0  0.06 11 131 0 0 26 10 184 114 1.8 15 8 5 10 8% 82432 320 33127 90%
		c
		c ---- [ result ] ------------------------------------------------------------
		c
		s UNSATISFIABLE
		...
		c ---- [ resources ] ---------------------------------------------------------
		c
		c maximum-resident-set-size:         19927040 bytes         19 MB
		c process-time:                                              0.06 seconds
		c
		c ---- [ shutting down ] -----------------------------------------------------
		c
		c exit 20
	\end{verbatim}
	\normalsize
	Hence the three residual orbit~2 branches are all excluded.
	
	Since orbit~1 contributes no audited survivor and orbit~2 contributes only the three branches, all of which are finally eliminated, the prism case $(n_0,n_1,n_2,n_3)=(4,33,12,2)$ is impossible.


The remaining nine cases are handled by the same general philosophy, but two variants occur.

\subsection{The standard cases with $1\le n_3\le 3$}

Six of the ten cases are treated by the same standard pipeline as in the representative \texttt{prism\_2n3} example in Section 6.4. Their numerical profile is recorded in Table~\ref{tab:standard-pipeline}.

\begin{table}[]
\centering
\setlength{\tabcolsep}{3pt}
\renewcommand{\arraystretch}{1.12}
\caption{Numerical profile of the cases treated by the standard pipeline.}
\label{tab:standard-pipeline}
\resizebox{\textwidth}{!}{%
\begin{tabular}{llrrrrrrrr}
\toprule
case & orbit & local sig. & global dist. & type-0 surv. & SAT & UNSAT & audit surv. & hard residual & final surv. \\
\midrule
\texttt{grid\_1n3} & --    & 227 & 441  & 100 & 75  & 25 & 0 & 0 & 0 \\
\midrule
\multirow{3}{*}{\texttt{grid\_2n3}}
& 1     & 130 & 513  & 46  & 38  & 8  & 0 & 0 & 0 \\
& 2     & 143 & 931  & 56  & 42  & 14 & 0 & 0 & 0 \\
& total & 273 & 1444 & 102 & 80  & 22 & 0 & 0 & 0 \\
\midrule
\multirow{3}{*}{\texttt{grid\_3n3}}
& 1     & 77  & 292  & 12  & 9   & 3  & 2 & 0 & 0 \\
& 2     & 91  & 251  & 12  & 10  & 2  & 1 & 0 & 0 \\
& total & 168 & 543  & 24  & 19  & 5  & 3 & 0 & 0 \\
\midrule
\multirow{3}{*}{\texttt{prism\_1n3}}
& 1     & 235 & 441  & 100 & 75  & 24 & 1 & 1 & 0 \\
& 2     & 231 & 441  & 100 & 75  & 24 & 0 & 1 & 0 \\
& total & 466 & 882  & 200 & 150 & 48 & 1 & 2 & 0 \\
\midrule
\multirow{3}{*}{\texttt{prism\_2n3}}
& 1     & 151 & 931  & 56  & 42  & 14 & 0 & 0 & 0 \\
& 2     & 134 & 513  & 46  & 38  & 8  & 3 & 0 & 0 \\
& total & 285 & 1444 & 102 & 80  & 22 & 3 & 0 & 0 \\
\midrule
\multirow{3}{*}{\texttt{prism\_3n3}}
& 1     & 95  & 207  & 13  & 11  & 2  & 2 & 0 & 0 \\
& 2     & 75  & 128  & 12  & 9   & 3  & 0 & 0 & 0 \\
& total & 170 & 335  & 25  & 20  & 5  & 2 & 0 & 0 \\
\bottomrule
\end{tabular}%
}
\end{table}

The only nonzero ``hard residual'' count occurs in the case \texttt{prism\_1n3}. There the standard pipeline leaves one tiny residual branch \texttt{dist 015} in each orbit. Those two branches are then excluded by a direct combinatorial argument based on the following residual data of the branches extracted by the inspection script. 

\small
\begin{verbatim}
	Residual data for prism_1n3, dist 015.
	
	Common distribution (both orbits):
	A - (0,7,1,0): 2
	B - (1,5,2,0): 20
	C - (1,6,0,1): 1
	D - (4,0,3,1): 1
	
	Common type-0 witness (both orbits):
	singleton_counts_per_block = [4,4,4,4,5]
	quad_vertices = [[0,1,2,3]]
	unique block missing from D-support = 4
	
	Orbit 1, dist 015:
	unique C-signature:
	triple  = [1,4,9]
	singles = [2,3,5,6,7,8]
	
	admissible D-signatures:
	triple [1,4,9], pairs {{2,5},{3,6},{7,8}}
	triple [1,4,9], pairs {{2,6},{3,5},{7,8}}
	triple [1,4,9], pairs {{2,6},{3,8},{5,7}}
	triple [1,4,9], pairs {{2,7},{3,5},{6,8}}
	
	Orbit 2, dist 015:
	unique C-signature:
	triple  = [7,8,9]
	singles = [1,2,3,4,5,6]
	
	admissible D-signatures:
	triple [7,8,9], pairs {{1,4},{2,5},{3,6}}
	triple [7,8,9], pairs {{1,4},{2,6},{3,5}}
	triple [7,8,9], pairs {{1,5},{2,4},{3,6}}
	triple [7,8,9], pairs {{1,5},{2,6},{3,4}}
	triple [7,8,9], pairs {{1,6},{2,4},{3,5}}
	triple [7,8,9], pairs {{1,6},{2,5},{3,4}}
\end{verbatim}
\normalsize
We now show that:
\begin{proposition}
	For the prism case $n_3=1$, the residual branches {\rm\texttt{dist 015}} for both $K_9$-orbits are infeasible. As a result, the prism case $(n_0,n_1,n_2,n_3)=(5,30,15,1)$ is infeasible.
\end{proposition}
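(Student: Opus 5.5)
The plan is to exclude both residual branches with a single counting argument that uses only the common distribution and the common type-$0$ witness, so the orbit-specific signature lists should not be needed. Write $c$ for the unique outside vertex with profile C $=(1,6,0,1)$ and $d$ for the unique one with profile D $=(4,0,3,1)$.

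First I would pin down the type-$3$ block. Since $n_3=1$, the single type-$3$ $K_5$ consists of the type-$3$ triple and exactly two outside vertices. The only outside vertices with $n_3(x)>0$ are $c$ and $d$, so this block is $\tau\cup\{c,d\}$, and the edge $cd$ is covered there. By the witness, $d$ lies in $Q_1,Q_2,Q_3,Q_4$ (it is the quad vertex), and $Q_5$ is the block missing from its support. Since $cd$ is already covered, $c$ lies in none of $Q_1,\dots,Q_4$. As $n_0(c)=1$, this forces $c\in Q_5$.

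Next I would identify all five outside vertices of $Q_5$. There are $20+1=21$ outside vertices with $n_0(x)=1$, namely the B-vertices and $c$, and the singleton counts $[4,4,4,4,5]$ account for all of them. Hence $Q_5$ consists of $c$ together with four further singleton vertices $s_1,\dots,s_4$, whose mutual edges are all covered by $Q_5$.

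Then I would do a neighbourhood count at $d$, which must cover all $23$ edges from $d$ to the other outside vertices.
\begin{itemize}
\item The blocks $Q_1,\dots,Q_4$ meet pairwise only in $d$, so they supply $16$ distinct outside neighbours, namely the singletons of $Q_1,\dots,Q_4$.
\item The type-$3$ block supplies $c$.
\item The three type-$2$ blocks through $d$ supply $6$ further outside neighbours, arranged as three disjoint pairs.
\item Profile D has $n_1=0$, so there are no other contributions.
\end{itemize}
Thus the $6$ vertices in the type-$2$ blocks through $d$ are exactly the outside vertices not already listed, namely $s_1,\dots,s_4$ together with the two A-vertices.

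These six vertices are split into three pairs, each pair lying in a common type-$2$ block with $d$. Only two of the pairs can contain an A-vertex, so at least one pair is $\{s_i,s_j\}$. That edge is then covered both by $Q_5$ and by a type-$2$ block, which contradicts edge-disjointness. Since nothing orbit-specific was used, both branches \texttt{dist 015} die, and together with Table~\ref{tab:standard-pipeline} this makes the prism case $(5,30,15,1)$ infeasible.

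The main obstacle is to make sure the witness data are being read correctly and completely. Specifically, I need that the quad vertex is $d$, that the singleton counts list all $n_0=1$ vertices per block, and that $Q_1,\dots,Q_4$ pairwise meet only in $d$, so that the $16$ neighbours really are distinct. The last point would follow from the type-$0$ filter's requirement that type-$0$ blocks share at most one outside vertex.
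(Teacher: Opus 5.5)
Your proposal is correct and follows the paper's argument essentially step for step. In both, $C$ is forced into the unique type-$0$ block avoiding $D$, so $D$'s type-$2$ partners must be the two A-vertices together with the four other vertices of that block, and pigeonhole then produces a type-$2$ block that repeats an edge of that type-$0$ block. The one small difference is that you deduce that $C$ and $D$ share the type-$3$ block directly from $n_3=1$ (so $\sum_x n_3(x)=2$), whereas the paper reads this off the orbit-specific residual data; your route makes it explicit that the argument does not depend on the orbit.
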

\begin{proof}
	Among the 24 vertices in $U$, denote by $A_1,A_2$ the two vertices of local profile $(0,7,1,0)$, by $C$ the vertex of local profile $(1,6,0,1)$, and by $D$ the vertex of local profile $(4,0,3,1)$. Since $n_0=5$ in this case, there is a unique type-0 $K_5$, denoted by $Q_4$, not containing $D$. The residual data above shows that $C$ and $D$ always share a type-3 $K_5$, and this is valid for both orbits. Therefore, $C\in Q_4$. Let $Q_4=\{C,B_1,B_2,B_3,B_4\}$, where $B_i$ has local profile $(1,5,2,0)$, $i=1,2,3,4$. The remaining 16 vertices with local profile $(1,5,2,0)$, denoted by $B_5,\cdots, B_{20}$, must each be contained in a type-0 $K_5$ commonly with $D$. Therefore, the vertices in $U$ that share a type-2 $K_5$ with $D$ are precisely $A_1,A_2,B_1,B_2,B_3,B_4$. Since $n_2(D)=3$, by pigeonhole principle, there exist $1\le i<j\le 4$ such that $B_i$ and $B_j$ share the same type-2 $K_5$ with $D$, but $B_i$ and $B_j$ already share a common type-0 $K_5$, namely $Q_4$, which is a contradiction!
\end{proof}

\subsection{The $n_3=0$ cases}

The two $n_3=0$ cases are easier because there are no type-$3$ blocks. After local-signature enumeration, orbit reduction, arithmetic stage, and the type-$0$ geometry filter, the remaining branches are not sent to SAT at all. Instead, we apply two exact combinatorial tests:
\begin{enumerate}[leftmargin=1.8em]
\item a block-compatibility test, where we check whether the assignment of 6 type-0 $K_5$'s can be compatible with the incidences to type-2 $K_5$'s for each outside vertex;
\item a type-2 edge-disjointness audit that checks whether the assigned type-2 $K_5$'s are actually edge-disjoint.
\end{enumerate}
The numerical data are given in Table~\ref{tab:n3zero}.

\begin{table}[]
\centering
\setlength{\tabcolsep}{3pt}
\small
\caption{Enumeration outcomes for the $n_3=0$ prism and grid cases.}
\label{tab:n3zero}
\begin{tabular}{lrrrrrr}
\toprule
pattern & local sig. & orbit reps. & global dist. & type-0 surv. & block-compat. surv. & final surv. \\
\midrule
grid  & 369 & 14 & 79 & 38 & 35 & 0 \\
prism & 377 & 54 & 79 & 38 & 35 & 0 \\
\bottomrule
\end{tabular}
\end{table}

In both cases, exactly 3 branches are eliminated by block compatibility, and all remaining $35$ branches fail the final type-$2$ edge-disjointness audit.

\subsection{The $n_3=4$ cases}

The two cases with $n_3=4$ are handled more efficiently by a \emph{template} method. A template is a multiplicity distribution of local signatures, finer than the global profile distributions used in the standard pipeline. Since only two type-$0$ quintuples are present, there are just two geometries to consider: the two type-$0$ blocks are either disjoint or they share one outside vertex. For each geometry one enumerates all possible templates. Each template is then assigned to the $24$ outside vertices; the forced type-$0$, type-$2$, and type-$3$ quintuples are reconstructed; repeated edges are rejected immediately; and any surviving residue is tested for type-$1$ completion.

We record the data in Table \ref{tab:n3-4-counts}. For the prism pattern, the local-signature enumeration yields $33$ signatures. The disjoint type-$0$ subcase yields $45$ templates and $1494$ branches; $1458$ fail by repeated edges and the remaining $36$ because some residual edge has no type-$1$ completion. The share-$1$ subcase yields $110$ templates and $1424$ branches; $1422$ fail by repeated edges and the remaining two by the residual type-$1$ test.

For the grid pattern there are two orbits. In orbit $\mathcal O_1$, the local-signature enumeration yields $31$ signatures; the disjoint subcase yields $15$ templates and $480$ branches, of which $470$ fail the audit and the remaining $10$ the type-$1$ completion; the share-$1$ subcase yields $80$ templates and $1040$ branches, all excluded immediately by repeated edges. In orbit $\mathcal O_2$, the local-signature enumeration again yields $31$ signatures; the disjoint subcase yields $45$ templates and $1494$ branches, of which $1458$ fail by repeated edges and the remaining $36$ by type-$1$ infeasibility; the share-$1$ subcase yields $108$ templates and $1404$ branches, all excluded by repeated edges.

Thus both $n_3=4$ cases are impossible.

\begin{table}[]
	\centering
	\small
	\caption{Enumeration outcomes for the $n_3=4$ prism and grid cases.}
	\label{tab:n3-4-counts}
	\resizebox{\textwidth}{!}{%
		\begin{tabular}{lllrrrr}
			\toprule
			Pattern & Orbit & Type-0 subcase & Templates & Branches & Repeated-edge rejections & No type-1 candidate \\
			\midrule
			Prism & unique & disjoint & $45$  & $1494$ & $1458$ & $36$ \\
			Prism & unique & share-1  & $110$ & $1424$ & $1422$ & $2$ \\
			\midrule
			Grid & $\mathcal O_1$ & disjoint & $15$  & $480$  & $470$  & $10$ \\
			Grid & $\mathcal O_1$ & share-1  & $80$  & $1040$ & $1040$ & $0$ \\
			Grid & $\mathcal O_2$ & disjoint & $45$  & $1494$ & $1458$ & $36$ \\
			Grid & $\mathcal O_2$ & share-1  & $108$ & $1404$ & $1404$ & $0$ \\
			\bottomrule
		\end{tabular}%
	}
\end{table}

\subsection{Conclusion of the algorithmic exclusion}

The ten cases are now all excluded, so we obtain the main algorithmic result.

\begin{theorem}
There is no decomposition of $K_{33}$ into $51$ copies of $K_5$ and $6$ copies of $K_3$.
\end{theorem}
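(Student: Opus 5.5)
The theorem collects the case analysis of this section, so the plan is to assemble the pieces already established and check that they exhaust all configurations. First, from the degree equation $32=2\alpha_x+4\gamma_x$ and the counting argument of Section~6.1, every vertex lies in $0$ or $2$ triangles. Exactly nine vertices carry triangles, and the six triangles form either the grid pattern $K_3\square K_3$ or the prism pattern $L(\operatorname{GP}(3,1))$. This is forced by the classification of cubic graphs of order $6$: the intersection graph of the triangles is cubic on six vertices, so it is $K_{3,3}$ or the prism. Next, because $\omega(K_9-T)=3$, every quintuple is of type $s\le 3$. The global counting equations \eqref{eq:k33-4}, together with the lemma giving $n_3\le 4$, leave exactly five profiles $(n_0,n_1,n_2,n_3)$. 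This yields ten cases: two patterns times five values of $n_3$.

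I then dispose of the cases by pattern of $n_3$, each time working up to $\operatorname{Aut}(T)$ from Lemma~\ref{lem:aut-t}.

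The cases $1\le n_3\le 3$ go through the standard pipeline. I choose orbit representatives of the families of type-3 triples, enumerate local signatures, and solve the arithmetic system \eqref{eq:sum-nix} for global distributions. I then apply the type-$0$ geometry filter, followed by the SAT signature-feasibility test and the edge-disjointness audit of decoded models. Any residue gets a type-$1$ completion test. Table~\ref{tab:standard-pipeline} records that no branch survives. The one exception is \texttt{prism\_1n3}, where two \texttt{dist 015} residues remain; these are killed by the pigeonhole proposition proved above. The representative case \texttt{prism\_2n3} is treated in full in Section~6.4.

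The two $n_3=0$ cases are closed by the block-compatibility test and the type-2 audit of Table~\ref{tab:n3zero}. The two $n_3=4$ cases are closed by the template method of Table~\ref{tab:n3-4-counts}, split according to whether the two type-$0$ quintuples are disjoint or share one vertex. Since the ten cases cover every possibility, no decomposition exists.

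The main obstacle is correctness and completeness rather than any single calculation. The proof depends on two things. First, every filter must be sound: it may only discard branches that are genuinely infeasible. The type-$0$ geometry filter and the SAT compatibility constraints are the most delicate here. Second, the orbit reduction must be valid, meaning every admissible family of type-3 triples lies in some chosen orbit. I would first re-derive the lists of legal type-3 triples and their orbits by hand for each pattern. I would then make sure the encoding of the type-$0$ constraints only forbids configurations in which two type-$0$ quintuples share at least two outside vertices. Any false rejection there would invalidate the conclusion.
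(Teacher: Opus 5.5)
Your proposal is correct and is essentially the paper's own proof. It uses the same reduction to nine triangle vertices in the grid or prism pattern and the bound $n_3\le 4$ giving ten cases. It then follows the same case split: the standard pipeline for $1\le n_3\le 3$, with the pigeonhole argument removing the two \texttt{dist 015} residues of \texttt{prism\_1n3}, and the separate block-compatibility and template methods for $n_3=0$ and $n_3=4$.

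For the soundness check you propose, add one item beyond the type-$0$ filter and the orbit reduction. As described, the pipeline discards a whole distribution when the single decoded SAT model fails the edge-disjointness audit or the type-$1$ completion, and that model is built on a single type-$0$ witness $\theta$. This is valid only if those constraints are encoded into the CNF, or if all witnesses and models are exhausted. The stated encoding does not forbid, for example, two type-$2$ blocks sharing two outside vertices.
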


The full data generated during the algorithmic exclusion is deposited in \cite{k33-decomp}. 

Since the only other arithmetic possibility for $57$ blocks had already been ruled out by the general theory, Theorem~\ref{thm:main-57} follows immediately. Thus we have:
\begin{corollary}
$\Cxi(33,\{3,4,5\},2)\ge 58$.
\end{corollary}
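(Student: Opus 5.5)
The corollary follows by assembling results already stated; the plan is to check that every $57$-block decomposition is covered, and to confirm that fewer than $57$ blocks is impossible.

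First, by Theorem~\ref{thm:general-k33} with $v=33\equiv 13\pmod{20}$, the minimum excess is empty. The same theorem gives
\[
\Cxi(33,\{3,4,5\},2)\ge \frac{33^2-33+84}{20}=\frac{1140}{20}=57.
\]
It therefore suffices to rule out an exact covering with exactly $57$ blocks. Any minimum-excess covering is then a decomposition, and the block counts $(\alpha,\beta,\gamma)$ must satisfy
\[
\alpha+\beta+\gamma=57,\qquad 3\alpha+6\beta+10\gamma=528.
\]
Subtracting three times the first equation from the second gives $3\beta+7\gamma=357$. Since $357=3\cdot 7\cdot 17$, we get $\gamma\equiv 0\pmod 3$ and $\beta\equiv 0\pmod 7$. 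Combined with $\alpha\ge 0$, this leaves exactly $(2,7,48)$ and $(6,0,51)$, as recorded in the paper.

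The case $(2,7,48)$ is excluded by Proposition~\ref{prop:k3k4k5-v13-a2-b7} with $v=33$, since $(33^2-33-96)/20=48$. The case $(6,0,51)$ is excluded by the theorem just established, which is the output of the layered search. Both possibilities are covered by the ten subcases (grid/prism $\times$ $n_3\in\{0,\dots,4\}$) excluded in Sections~6.4--6.7. Hence no $57$-block decomposition exists, which is Theorem~\ref{thm:main-57}, and the lower bound rises to $58$.

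No step is a real obstacle, since all the heavy lifting sits in the cited results. The only point needing care is exhaustiveness: the case split in the search must account for every configuration with $(\alpha,\beta,\gamma)=(6,0,51)$. Concretely, this means checking that the six triangles are forced onto nine vertices in the grid or prism pattern, and that $n_3\le 4$. Both facts were proved earlier in the paper.
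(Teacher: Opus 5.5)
Your proposal is correct and follows the paper's route: the general bound of $57$, the arithmetic reduction to $(2,7,48)$ and $(6,0,51)$, excluding the former by Proposition~\ref{prop:k3k4k5-v13-a2-b7} and the latter by the layered search. One small slip is that the ten grid/prism subcases concern only $(6,0,51)$ and not both arithmetic cases, but this is harmless because you already dispose of $(2,7,48)$ via the proposition.
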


\section{One more step: excluding $58$ blocks}

We now present the counting argument that upgrades the bound from $58$ to $59$.

\begin{proof}[Proof of Theorem~\ref{thm:main-59}]
Assume that $K_{33}$ admits a decomposition into $58$ cliques of order 3, 4 or 5. Solving
\[
\alpha+\beta+\gamma=58,
\qquad
3\alpha+6\beta+10\gamma=\binom{33}{2}=528,
\]
we get only two nonnegative integer solutions:
\[
(\alpha,\beta,\gamma)=(0,13,45)
\qquad\text{or}\qquad
(\alpha,\beta,\gamma)=(4,6,48).
\]

\paragraph{Case $(\alpha,\beta,\gamma)=(0,13,45)$.}
At a vertex $x$,
\[
3\beta_x+4\gamma_x=32,
\]
so $\beta_x\equiv 0\pmod 4$ and hence $\beta_x\in\{0,4,8\}$. If some vertex had $\beta_x=8$, then the $24$ vertices sharing a $K_4$ with it would each satisfy $\beta_y\ge 4$, which would force
\[
\beta\ge \frac{8+4\cdot 24}{4}=26,
\]
a contradiction. Thus $\beta_x\in\{0,4\}$ for all vertices, so exactly $13$ vertices satisfy $\beta_x=4$ and the remaining $20$ satisfy $\beta_x=0$.

The $13$ copies of $K_4$ cover $13\cdot 6=78$ edges, which is exactly $e(K_{13})$. Hence those $13$ vertices form a $K_{13}$ completely decomposed into 13 $K_4$'s, and no edge inside this distinguished $K_{13}$ is covered by a $K_5$. But each of those $13$ vertices satisfies $\gamma_x=5$, so counting incidences with the $K_5$'s gives
\[
\gamma\ge 5\cdot 13=65,
\]
contrary to $\gamma=45$.

\paragraph{Case $(\alpha,\beta,\gamma)=(4,6,48)$.}
At a vertex $x$,
\[
2\alpha_x+3\beta_x+4\gamma_x=32,
\]
so $2\alpha_x+3\beta_x\equiv 0\pmod 4$, and in particular $\beta_x$ is even. If some vertex had $\beta_x\ge 4$, then the vertices sharing a $K_4$ with it would force too many $K_4$ incidences; hence $\beta_x\in\{0,2\}$ for all vertices. The possible nonzero local pairs are therefore
\[
(\alpha_x,\beta_x)=(1,2),\ (2,0),\ (3,2),\ (4,0).
\]
Let $\lambda_1,\lambda_2,\lambda_3,\lambda_4$ be the numbers of vertices of these four types. Counting triangle and $K_4$ incidences gives
\[
\lambda_1+2\lambda_2+3\lambda_3+4\lambda_4=3\alpha=12,
\qquad
2\lambda_1+2\lambda_3=4\beta=24.
\]
Thus $\lambda_1=12$ and $\lambda_2=\lambda_3=\lambda_4=0$. So there are exactly $12$ vertices with $(\alpha_x,\beta_x)=(1,2)$, and the remaining $21$ vertices lie only in $K_5$'s.

Let $U$ be the set of these $12$ active vertices. Among them, the $4$ triangles and $6$ copies of $K_4$ cover
\[
4\cdot 3 + 6\cdot 6 = 48
\]
incidences of edges, so the number of uncovered edges inside $U$ is
\[
\binom{12}{2} - 4\cdot 3 - 6\cdot 6 = 18.
\]
These $18$ edges must be covered by the $48$ quintuples. Let $n_s$ be the number of quintuples meeting $U$ in exactly $s$ vertices. Then
\[
\sum_{s=0}^4 n_s = 48,
\qquad
\sum_{s=0}^4 s n_s = 72,
\qquad
\sum_{s=0}^4 \binom{s}{2}n_s = 18.
\]
But for each $s\in\{0,1,2,3,4\}$ we have $\binom{s}{2}\ge s-1$, so
\[
18 = \sum_{s=0}^4 \binom{s}{2}n_s
\ge \sum_{s=0}^4 (s-1)n_s
= \sum_{s=0}^4 s n_s - \sum_{s=0}^4 n_s
= 72-48=24,
\]
a contradiction. In other words, the total $K_5$'s count 48 would force a larger amount of edges inside $U$ to be covered by quintuples. 

Both arithmetic possibilities are impossible, so no $58$-block decomposition exists.
\end{proof}

\section{Connection with the packing number $D(33,5,2)$}

The mixed-covering problem for $K_{33}$ we studied in this paper is closely related to the unresolved packing number $D(33,5,2)$, known to lie between $48$ and $51$ \cite{BrouwerCWC,Johnson1972}. If a packing of $51$ copies of $K_5$ in $K_{33}$ existed, then its leave would be a $4$-regular graph on $9$ vertices. Up to isomorphism, there are exactly $16$ simple $4$-regular graphs on $9$ vertices; since any such graph is necessarily connected, this is exactly the number of connected quartic graphs of order $9$, see \cite{MeringerRegularGraphs}. The present $57$-block exclusion shows that two natural leave patterns, namely the two obtained by decomposing the leave into six triangles on nine vertices (the grid and prism patterns), cannot occur. In this sense, the present paper is not only about a covering number: it also removes two candidate leaves from the $51$-packing problem.

\section{Conclusion}

We have shown that an exact mixed-clique covering of $K_{33}$ by only $57$ blocks of order 3, 4 or 5 does not exist, and a further counting argument excludes the $58$-block case as well; therefore, the lower bound for $C^\xi(33,\{3,4,5\},2)$ is upgraded to $59$.

From a methodological perspective, the main point is that the proof is not a monolithic SAT or ILP certificate. It is a progressive exact search built from several layers of reduction: symmetry, local signature enumeration, arithmetic distributions, partial geometry filter, SAT feasibility on reduced branches, and post-SAT combinatorial audits. This framework appears flexible enough to attack other hard covering and packing problems in which direct exact formulations are too large but structured residues remain accessible.

\paragraph*{Acknowledgement} This work was partially supported by Grant of SGS No. SP2026/019 and No. SP2026/050, VSB -- Technical University of Ostrava, Czech Republic. Petr Kov\'a\v{r} was co-funded by the financial support of the European Union under the REFRESH – Research Excellence For Region Sustainability and High-tech Industries project number CZ.10.03.01/00/22 003/0000048 via the Operational Programme Just Transition. Yifan Zhang was co-funded by the Czech Science Foundation (GAČR), Grant No.~25-16847S.

\printbibliography

\end{document}